\newcommand{\minh}{\mathcal{M}\mathrm{inh}}
\newcommand{\maxh}{\mathcal{M}\mathrm{axh}}
\newcommand{\cdeg}{\mathrm{cdeg}}
\newcommand{\IsInT}{\mathrm{IsInT}(v,G,\mathrm{var} \ c \texttt{\_}T)}
\newcommand{\IsInTv}{\mathrm{IsInT}(v,G, c \texttt{\_}T)}
\newcommand{\IsInTw}{\mathrm{IsInT}(w,G, c \texttt{\_}T)}
\newcommand{\Tor}{\mathrm{Tor}}
\let\To=\longrightarrow
\def\CC{{\mathcal C}}
\def\opn#1#2{\def#1{\operatorname{#2}}} 
\opn\chara{char} \opn\length{\ell} \opn\pd{pd} \opn\rk{rk}
\opn\projdim{proj\,dim} \opn\injdim{inj\,dim} \opn\rank{rank}
\opn\depth{depth} \opn\grade{grade} \opn\height{height}
\opn\embdim{emb\,dim} \opn\codim{codim}
\opn\Tr{Tr} \opn\bigrank{big\,rank}
\opn\superheight{superheight}\opn\lcm{lcm}
\opn\trdeg{tr\,deg}
\opn\reg{reg} \opn\lreg{lreg} \opn\ini{in} \opn\lpd{lpd}
\opn\size{size}\opn\bigsize{bigsize}
\opn\cosize{cosize}\opn\bigcosize{bigcosize}
\opn\sdepth{sdepth}\opn\sreg{sreg}
\opn\link{link}\opn\fdepth{fdepth}
\newtheoremstyle{theorem}
{12pt} 
{12pt} 
{\slshape 

} 
{} 
{\bfseries} 
{} 
{ } 
{} 
\newtheoremstyle{definition}
{12pt} 
{12pt} 
{\slshape 

} 
{} 
{\bfseries} 
{} 
{ } 
{} 
\newtheoremstyle{algorithm}
{10pt} 
{10pt} 
{\slshape 

} 
{} 
{\bfseries} 
{} 
{ } 
{} 
\theoremstyle{theorem}
\newtheorem{theorem}{Theorem}
\newtheorem{corollary}[theorem]{Corollary}
\newtheorem{proposition}[theorem]{Proposition}
\newtheorem{lemma}[theorem]{Lemma}
\theoremstyle{definition}
\newtheorem{definition}[theorem]{Definition}
\theoremstyle{remark}
\newtheorem{remark}[theorem]{Remark}
\newtheorem{example}[theorem]{Example}
\theoremstyle{algorithm}
\numberwithin{theorem}{section}
\title[Krull dimension and regularity of block graphs]{Krull dimension and regularity of binomial edge ideals of block graphs}
\author{Carla Mascia}
\address{University of Trento}
\email{carla.mascia@unitn.it}
\author{Giancarlo Rinaldo}
\address{University of Trento}
\email{giancarlo.rinaldo@unitn.it}
\begin{document}

\maketitle

\begin{abstract}
We give a lower bound for the Castelnuovo-Mumford regularity of binomial edge ideals of block graphs by computing the two distinguished extremal Betti numbers of  a new family of block graphs, called flower graphs. Moreover, we present linear time algorithms to compute the Castelnuovo-Mumford regularity and the Krull dimension of binomial edge ideals of block graphs. 
\end{abstract}

\section*{Introduction}
 In 2010, binomial edge ideals were introduced in \cite{HHHKR} and also appeared independently in \cite{MO}. Let $S = K[x_i, y_j]_{1 \leq i,j \leq n}$ be the polynomial ring in $2n$ variables with coefficients in a field $K$. Let $G$ be a graph on vertex set $[n]$ and edges $E(G)$. The ideal $J_G$ of $S$ generated by the binomials $f_{ij} = x_iy_j - x_jy_i$ such that $i<j$ and $\{i,j\} \in E(G)$ is called the \textit{binomial edge ideal} of $G$. Any ideal generated  by a set of $2$-minors  of a $2\times n$-matrix of indeterminates may be viewed as the binomial edge ideal of a graph. 
 
For a set $T \subset [n]$, let $G_{[n]\setminus T}$ be the induced subgraph of $G$ with vertex set $[n] \setminus T$ and $G_1, \dots, G_{c(T)}$ the connected components of $G_{[n] \setminus T}$. $T$ is  a cutset of $G$ if $c(T \setminus \{i\}) < c(T)$ for each $i \in T$, and we denote by $\mathcal{C}(G)$ the set of all cutsets for $G$. In \cite{HHHKR} and \cite{MO} the authors gave a description of the primary decomposition of $J_G$ in terms of prime ideals induced by the set $\CC(G)$ (see \eqref{Eq:primarydec}). Thanks to this result the following formula for the Krull dimension is obtained
 \begin{equation}\label{Eq:dimension}
   \dim S/J_G = \max_{T \in\CC(G)} \{n +c(T) - |T|\}.
 \end{equation}
 The second author in \cite{R1} described an algorithm to compute the primary decomposition \eqref{Eq:primarydec}, and hence the Krull dimension.  Unfortunately, this algorithm is exponential in time and space.

A \textit{block graph}, also known as clique tree, is a graph whose blocks are cliques. In general, computing the depth of a ring is a difficult task. In \cite{EHH} the authors prove that when $G$ is a block graph, $\depth S/J_G=n+c$ and, equivalently, $\projdim S/J_G=n-c$, where $c$ is the number of connected components of $G$. Given any block graph, such an immediate formula for the Krull dimension does not exist, and we believe that is hard to obtain something that is better than \eqref{Eq:dimension}.

Nevertheless we believe the block graphs are the easier but not trivial class where we can obtain a good algorithm to compute the Krull dimension. We present an algorithm (Theorem \ref{Alg for Krull dim}) that is linear in time and space and computes the Krull dimension. The idea is to find a minimal prime ideal of minimum height since it induces the Krull dimension of $S/J_G$. We have implemented the algorithm using CoCoA (\cite{Co}), when $G$ is a tree and it is freely downloadable on \cite{MR}.

Another fundamental invariant that has been studied in deep is the Castelnuovo-Mumford regularity of binomial edge ideal. Lower and upper bounds for the regularity are known by  Matsuda and Murai \cite{MM} and Kiani and Saeedi Madani \cite{KM}. In \cite{EZ}, the authors proved the conjecture posed in \cite{MK2} for closed graphs and block graphs. For these graphs, the regularity of $S/J_G$ is bounded below by the
length of the longest induced path of $G$ and above by $c(G)$, where $c(G)$ is the number of maximal cliques of $G$. 
Furthermore, Kiani and Saeedi Madani  characterized all graphs whose binomial edge ideal have regularity 2 and regularity 3, see \cite{MK} and \cite{MK1}.

It is still an open problem to determine an explicit formula for the regularity of binomial edge ideals for block graphs in terms of the combinatorics of the graph. Recently, in \cite{HR} Herzog and the second author computed one of the distinguished extremal Betti number of the  binomial edge ideal of a block graph and classify all block graphs admitting precisely one extremal Betti number giving a natural lower bound for the regularity of any block graph. Jayanthan et al in \cite{JNR} and in \cite{JNR2} obtained a related result for trees, a subclass of block graphs. 

Inspired by these results we define a new class of graphs, namely the flower graphs (see Definition \ref{Def:BigFlower} and Figure \ref{Big Flower}), for which we compute the superextremal Betti numbers (see Theorem \ref{main theo}) and the regularity (see Corollary \ref{cor:reg flower}). As a consequence we obtain new lower bounds in Theorem \ref{The:LowerBound} and Corollary \ref{Cor:LowerBound} for the regularity of any block graph. 

In Section \ref{sec:algreg}, we state the main result of this work, Theorem \ref{Theo: reg block graph}, that provides an efficient method to compute the Castelnuovo-Mumford regularity of any binomial edge ideal of block graphs, exploiting the notion of end-flowers (see Definition \ref{Def: end-flowers}) and by means of an unique block graph traversal.

\section{On the height of minimal prime ideals of $J_G$ and decomposability of block graphs}

We start this section by recalling the formula to compute the primary decomposition of a binomial edge ideal $J_G$. Let $G$ be a graph on $[n]$, $T \subset [n]$ is called \textit{cutset} of $G$ if $c(T \setminus \{i\}) < c(T)$ for each $i \in T$, where $c(T)$ denotes the number of connected components induced by removing $T$ from $G$. We denote by $\mathcal{C}(G)$ the set of all cutsets of $G$. When $T \in \mathcal{C}(G)$ consists of one vertex $v$, $v$ is called a \textit{cutpoint}. Define
$$
P_T(G) = \left( \bigcup_{i \in T} \{x_i, y_i\}, J_{\tilde{G}_1}, \dots, J_{\tilde{G}_{c(T)}} \right) \subseteq S
$$
where $\tilde{G}_i$, for $i=1, \dots, c(T)$, denotes the complete graph on $V(G_i)$.
$P_T(G)$ is a prime ideal of height $n-c(T)+ |T|$. It holds 
\begin{equation}\label{Eq:primarydec}
J_G = \bigcap_{T \in \CC(G)} P_T(G). 
\end{equation}

We denote by $\mathcal{M}(G)$ the minimal prime ideals of $J_G$, by $\minh(G) \subseteq \mathcal{M}(G)$ the minimal prime ideals $P_T(G)$ of minimum height and by $\maxh(G) \subseteq \mathcal{M}(G)$ the minimal prime ideals $P_T(G)$ of maximum height.

A subset $C$ of $V(G)$ is called a \textit{clique} of $G$ if for all $i, j \in C$, with $i \neq j$, one has $\{i,j\} \in E(G)$. A \textit{maximal clique} is a clique that cannot be extended by including one more adjacent vertex.

A connected subgraph of $G$ that has no cutpoint and is maximal with respect to this property is a \textit{block}. $G$ is called \textit{block graph} if all its blocks are complete graphs. One can see that a graph $G$ is a block graph if and only if it is a chordal graph in which every two maximal cliques have at most one vertex in common.  Let $G$ be a block graph, an \textit{endblock} of $G$ is a block having exactly one cutpoint. 

The \textit{clique degree} of $v$, denoted by $\cdeg(v)$, is the number of maximal cliques to which $v$ belongs. A vertex $v$ is called a \textit{free vertex} of $G$ if $\cdeg(v) =1$, and is called an \textit{inner vertex} of $G$ if $\cdeg(v) >1$.

\begin{definition}
A graph $G$ is \textit{decomposable} if exists  a decomposition
\begin{equation}\label{G1 u G2}
G = G_1 \cup G_2
\end{equation}
with $V(G_1) \cap V(G_2) = \{v\}$ such that $v$ is a free vertex of $G_1$ and $G_2$. If $G$ is not decomposable, we call it \textit{indecomposable}. By a recursive decomposition (\ref{G1 u G2}) applied to each $G_1$ and $G_2$, after a finite number of steps we obtain
\begin{equation}\label{dec unique}
G = G_1 \cup \dots \cup G_r 
\end{equation}
where $G_1, \dots, G_r $ are indecomposable and for $1 \leq i < j \leq r$ either $V(G_i)\cap V(G_j) = \emptyset$ or $V(G_i)\cap V(G_j) = \{ v_{ij}\}$, where $v_{ij}$ is a free vertex of $G_i$ and $G_j$. The decomposition (\ref{dec unique})  is unique up to ordering and we say that G is decomposable into indecomposable graphs $G_1, \dots,G_r$.
\end{definition}

For the sake of completeness, we collect in the next proposition the results showed in  \cite{HR} and \cite{RR} concerning Krull dimension of $S/J_G$, height of the ideals $P_T(G)$ and the Castelnuovo-Mumford regularity of $S/J_G$, when $G$ is decomposable.

\begin{proposition}\label{prop:sum of invar}
Let $G$ be a graph decomposable into $G_1$ and $G_2$, with $V(G_1) \cap V(G_2) = \{v\}$. Then
\begin{enumerate}
\item $\dim S/J_G = \dim S_1/J_{G_1} + \dim S_2/J_{G_2}-2$, where $S_i = K[x_j,y_j]_{j \in V(G_i)}$ for $i=1,2$;
\item $\height P_T(G) = \height P_{T_1}(G_1) + \height P_{T_2}(G_2)$,  with $T \in \mathcal{C}(G)$, $T_1 \in \mathcal{C}(G_1)$,  and $T_2 \in \mathcal{C}(G_2)$ and  either $T = T_1 \cup T_2$ or $T = T_1 \cup T_2 \cup \{v\}$;
\item $\reg S/J_G = \reg S/J_{G_1} + \reg S/J_{G_2}$.
\end{enumerate}
\end{proposition}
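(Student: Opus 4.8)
The plan is to reduce parts (i) and (ii) to a precise description of the cutsets of $G$ in terms of those of $G_1$ and $G_2$, and to treat part (iii) separately by a homological argument on the coordinate ring.

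Write $S_i = K[x_j,y_j]_{j\in V(G_i)}$, $n_i = |V(G_i)|$, so that $n = n_1+n_2-1$ and $S = S_1\otimes_{K[x_v,y_v]}S_2$. Two observations drive the combinatorics: since $v$ is a free vertex of each $G_i$ it lies in a single maximal clique of $G_i$, hence is not a cutpoint of $G_i$ and lies in no cutset of $G_i$; and since $v$ is the only common vertex, deleting it leaves no edge of $G$ between $V(G_1)$ and $V(G_2)$. With these I would prove the correspondence: for $T\subseteq V(G)$ set $T_i = (T\setminus\{v\})\cap V(G_i)$; then $T\in\CC(G)$ iff $T_1\in\CC(G_1)$ and $T_2\in\CC(G_2)$, and each such $T$ equals $T_1\cup T_2$ if $v\notin T$ and $T_1\cup T_2\cup\{v\}$ if $v\in T$. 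This is a count of connected components: a component of $G_{V(G)\setminus T}$ either lies on one side avoiding $v$, in which case it is a component of $(G_i)_{V(G_i)\setminus T_i}$, or it is the component through $v$, which when $v\notin T$ is the union of the $v$-components on the two sides. Using also that deleting the free vertex $v$ from $G_i$ does not change its number of components, one gets $c(T) = c_{G_1}(T_1)+c_{G_2}(T_2)-1$ if $v\notin T$ and $c(T)=c_{G_1}(T_1)+c_{G_2}(T_2)$ if $v\in T$, where $c_{G_i}(\cdot)$ is the number of components of the corresponding induced subgraph of $G_i$. The same bookkeeping shows no vertex of $T_i$ becomes redundant after gluing, and conversely, so $T_i\in\CC(G_i)$. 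I expect this step to be routine but the one needing care about degenerate configurations, e.g. when a clique at $v$ is a single edge.

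Granting the correspondence, (ii) follows by substituting into $\height P_T(G) = n-c(T)+|T|$. If $v\notin T$ then $|T| = |T_1|+|T_2|$ and
\[
\height P_T(G) = (n_1+n_2-1)-\bigl(c_{G_1}(T_1)+c_{G_2}(T_2)-1\bigr)+(|T_1|+|T_2|) = \height P_{T_1}(G_1)+\height P_{T_2}(G_2);
\]
if $v\in T$ then $|T| = |T_1|+|T_2|+1$ while $c(T) = c_{G_1}(T_1)+c_{G_2}(T_2)$, and the two changes offset so the same identity holds. For (i), by \eqref{Eq:dimension} one has $\dim S/J_G = 2n - \min_{T\in\CC(G)}\height P_T(G)$; since the height is a function of $T_1$ plus a function of $T_2$ (and including $v$ or not does not change it), the minimum splits, and with $2n = 2(n_1+n_2-1)$ we get $\dim S/J_G = \bigl(2n_1-\min\height P_{T_1}(G_1)\bigr)+\bigl(2n_2-\min\height P_{T_2}(G_2)\bigr)-2 = \dim S_1/J_{G_1}+\dim S_2/J_{G_2}-2$.

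Part (iii) is the real content, and here I would lean on \cite{HR, RR}. Since $E(G) = E(G_1)\sqcup E(G_2)$, we have $J_G = J_{G_1}S+J_{G_2}S$ and hence a graded isomorphism $S/J_G\cong (S_1/J_{G_1})\otimes_A(S_2/J_{G_2})$ with $A = K[x_v,y_v]$. The crux is the vanishing $\Tor^A_k(S_1/J_{G_1},S_2/J_{G_2})=0$ for $k>0$ — equivalently, that tensoring a minimal graded free resolution of $S_1/J_{G_1}$ over $S_1$ with one of $S_2/J_{G_2}$ over $S_2$ yields a minimal free resolution of $S/J_G$ over $S$ — and this is exactly where one uses that $v$ is \emph{free} rather than merely a cutpoint of $G$, forcing $S_i/J_{G_i}$ to be well behaved over the polynomial subring $A$. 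A Künneth-type argument then gives
\[
\beta_{i,j}(S/J_G) \;=\; \sum_{\substack{a+b=i\\ p+q=j}}\beta_{a,p}(S/J_{G_1})\,\beta_{b,q}(S/J_{G_2}),
\]
whence both $\pd$ and $\reg$ are additive, so in particular $\reg S/J_G = \reg S/J_{G_1}+\reg S/J_{G_2}$. The Tor-vanishing/minimality statement is the main obstacle: unlike (i) and (ii) it is not a counting argument. An alternative, possibly cleaner route is to induct on $|E(G)|$ using an Ohtani-type short exact sequence at $v$ and track the regularity along it; either way the result is available from \cite{HR, RR}.
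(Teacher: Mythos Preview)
The paper does not actually prove this proposition: it is stated explicitly as a collection of known results from \cite{HR} and \cite{RR}, with no argument given. Your sketch is correct and, in fact, fills in more than the paper does. For (i) and (ii) your cutset correspondence and component count are exactly the right mechanism; for (iii) your Tor-vanishing/K\"unneth argument is essentially the one carried out in \cite{RR} (the freeness of $v$ is what makes $S_i/J_{G_i}$ well behaved over $A=K[x_v,y_v]$, so the tensor of minimal resolutions stays exact), and you correctly flag that step as the only non-formal one. One cosmetic point: in your Betti-number convolution the factors should be $\beta_{a,p}^{S_1}(S_1/J_{G_1})$ and $\beta_{b,q}^{S_2}(S_2/J_{G_2})$ rather than Betti numbers over $S$; the conclusion about additivity of $\reg$ is unaffected.
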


For a block graph $G$, being decomposable can be read from the primary decomposition of $J_G$, in particular from the ideals in $\maxh(G)$.

\begin{proposition}\label{prop:indec TFAE}
Let $G$ be a block graph. The following are equivalent:
\begin{enumerate}
\item G is indecomposable;
\item if $v \in V(G)$, then $\cdeg(v)\neq 2$;
\item $\maxh(G) = \{P_{\emptyset}(G)\}$.
\end{enumerate}
\end{proposition}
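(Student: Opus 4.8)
The plan is to prove the cycle of implications $(i)\Rightarrow(ii)\Rightarrow(iii)\Rightarrow(i)$. Throughout I will use the structure of block graphs: every vertex $v$ with $\cdeg(v)=k$ sits at the intersection of $k$ maximal cliques, which (since in a block graph two maximal cliques share at most one vertex) pairwise meet only in $v$.

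For $(i)\Rightarrow(ii)$, I argue by contraposition. Suppose some $v\in V(G)$ has $\cdeg(v)=2$, so $v$ lies in exactly two maximal cliques $C_1,C_2$ with $C_1\cap C_2=\{v\}$. Let $G_1$ be the induced subgraph on the union of all connected components of $G_{[n]\setminus\{v\}}$ meeting $C_1$, together with $v$, and let $G_2$ be defined symmetrically with $C_2$. Then $G=G_1\cup G_2$, $V(G_1)\cap V(G_2)=\{v\}$, and $v$ is a free vertex of both $G_1$ and $G_2$ (its clique degree drops to $1$ in each piece), so $G$ is decomposable. The only delicate point is checking that $G_1$ and $G_2$ are each connected and that no component of $G_{[n]\setminus\{v\}}$ is incident to both $C_1$ and $C_2$ — this follows because any path between a $C_1$-side vertex and a $C_2$-side vertex must pass through $v$, as $v$ is a cutpoint separating $C_1\setminus\{v\}$ from $C_2\setminus\{v\}$.

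For $(iii)\Rightarrow(i)$, again by contraposition: if $G=G_1\cup G_2$ is a decomposition with $V(G_1)\cap V(G_2)=\{v\}$ and $v$ free in each $G_i$, I must exhibit a $T\in\CC(G)$, $T\neq\emptyset$, with $\height P_T(G)\geq \height P_\emptyset(G)$, i.e. with $c(T)-|T|\geq 0$. Take $T=\{v\}$. Since $v$ is free in $G_1$, the vertices of $G_1\setminus\{v\}$ stay connected after removing $v$, and likewise for $G_2$, so $c(\{v\})=2$ and hence $c(T)-|T|=2-1=1>0=c(\emptyset)-|\emptyset|$. This shows $P_\emptyset(G)\notin\maxh(G)$, so $(iii)$ fails.

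For the remaining implication $(ii)\Rightarrow(iii)$, assume every vertex has clique degree $\neq 2$ and show $P_\emptyset(G)$ is the unique ideal of maximum height among the minimal primes, equivalently (using $\height P_T(G)=n-c(T)+|T|$) that $c(T)-|T|<0$ for every nonempty cutset $T$. This is the step I expect to be the main obstacle. The natural approach is induction on $|T|$: for a cutpoint $T=\{v\}$ with $\cdeg(v)=k\neq 2$, removing $v$ creates exactly $k$ components when $k\geq 3$ — wait, this would give $c(T)-|T|=k-1\geq 2>0$, which is the wrong direction. So the statement cannot be about arbitrary cutsets but must use minimality of the prime; I will instead need to compare with the other primes, or — more likely — reinterpret $\maxh$ correctly: the point is that $(ii)\Rightarrow$ no \emph{proper} decomposition exists, and one must show directly that under $(ii)$, for any $T\in\CC(G)$ the prime $P_T(G)$ with $T\neq\emptyset$ fails to be of maximal height because $P_\emptyset(G)$ strictly contains... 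Actually the cleanest route is: invoke Proposition~\ref{prop:sum of invar}(ii) together with the already-established equivalence $(i)\Leftrightarrow(ii)$ and induction on the number of blocks. If $G$ is indecomposable then no vertex has $\cdeg=2$; one shows every cutpoint $v$ has $\cdeg(v)\geq 3$ forces $c(T)<|T|+c(\emptyset)$... Let me restate the real plan: prove $(ii)\Rightarrow(iii)$ by showing that for every cutset $T$ with $|T|\geq 1$, we have $n-c(T)+|T|>n-1=\height P_\emptyset(G)$... no, I want $\maxh$ so I want $P_\emptyset$ to have the \emph{maximum} height, so I want $n-c(T)+|T|<n$ i.e. $c(T)>|T|$ for all nonempty cutsets. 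The key lemma will be: in a block graph with no $\cdeg=2$ vertex, every nonempty cutset $T$ satisfies $c(T)>|T|$; proved by induction, peeling off one endblock at a time and checking that an endblock's cutpoint, having clique degree $\geq 3$, contributes enough extra components. The genuinely hard part is making this peeling argument precise while keeping track of whether the cutpoint of the endblock lies in $T$.
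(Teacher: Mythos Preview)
Your proposal has a recurring arithmetic slip that derails both nontrivial implications: for a connected graph $c(\emptyset)=1$, not $0$, so $\height P_\emptyset(G)=n-1$. In your $(iii)\Rightarrow(i)$, taking $T=\{v\}$ with $\cdeg(v)=2$ gives $c(T)-|T|=2-1=1=c(\emptyset)-|\emptyset|$, hence $\height P_T(G)=\height P_\emptyset(G)$. The correct conclusion is not that $P_\emptyset(G)\notin\maxh(G)$ but that $P_T(G)$ is \emph{also} in $\maxh(G)$, so $\maxh(G)\neq\{P_\emptyset(G)\}$. That still contradicts $(iii)$, so this direction is easily fixed and matches the paper's $(3)\Rightarrow(2)$.

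The same slip is what sends $(ii)\Rightarrow(iii)$ off the rails. The correct target is $\height P_T(G)<n-1$ for every nonempty cutset $T$, i.e.\ $c(T)-|T|\geq 2$, not $c(T)>|T|$. With this in hand your one-vertex case is fine (a cutpoint $v$ with $\cdeg(v)\neq 2$ has $\cdeg(v)\geq 3$, so $c(\{v\})-1\geq 2$), and there was no ``wrong direction'' to panic about. But your inductive plan via peeling endblocks is vague, and it is unclear how an endblock interacts with an arbitrary cutset $T$.

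The paper's argument for $(ii)\Rightarrow(iii)$ is short and uses a different idea: suppose some nonempty cutset $T$ has $c(T)-|T|\leq 1$ and take $T$ minimal with this property. The case $|T|=1$ forces $\cdeg(v)=2$ as above. For $|T|\geq 2$, one invokes the block-graph fact that $T_1:=T\setminus\{v_r\}$ is again a cutset; then $c(T_1)<c(T)$ and $|T_1|=|T|-1$ give $c(T_1)-|T_1|\leq c(T)-|T|\leq 1$, contradicting minimality of $T$. The key ingredient you are missing is precisely this closure property of cutsets in block graphs, which replaces any endblock induction.
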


\begin{proof}
\  
\begin{enumerate}
\item[(1)] $\Leftrightarrow \mathrm{(2)}$ It is trivial. 
\item[(2)] $\Rightarrow \mathrm{(3)}$ Without loss of generality,  let $G$ be connected. Since $\height P_{\emptyset}(G) =n-1$, we want to prove that for any $T \neq \emptyset$, $\height P_T(G) < n-1$ . Let $T \in \mathcal{C}(G)$, with $\height P_T(G) \geq n-1$, that is $c(T) - |T| \leq 1$. If $T= \{v\}$, then $c(T) \leq 2$ or equivalently $\cdeg(v) \leq 2$. Since $v$ is a cutpoint, it is not a free vertex, and then $\cdeg(v) =2$, which is in contradiction to the hypothesis. Let $T = \{v_1, \dots, v_r\}$, with $r \geq 2$, such that $\height P_T(G) \geq n-1$ and suppose it is  minimal with respect to this property. In a block graph, $T_1 = T\setminus \{v_r\}$ is a cutset, too. By definition, $c(T_1) < c(T)$ and $|T_1|= |T|-1$, then $c(T_1)-|T_1| <2$. It follows that $\height P_{T_1}(G) \geq n-1$, but it is in contradiction to the hypothesis on the minimality of $T$.
\item[(3)] $\Rightarrow \mathrm{(2)}$ Assume that there exists a vertex $v \in V(G)$ such that $\cdeg(v)=2$. Let $T=\{v\}$, then $\height P_T(G) = \height P_{\emptyset}(G)= n-1$. Hence, $P_T(G) \in \maxh(G)$, too. The latter is in contradiction to the hypothesis.  
\end{enumerate}

\end{proof}

We observe that for a generic graph $G$, is not true that if $G$ is indecomposable then $\cdeg(v)\neq 2$ for any $v \in V(G)$. It is sufficient to consider $G=C_4$, with $V(G)=\{1, \dots,4\}$ and $E(G) = \{\{i,i+1\} | i=1,\dots,3\} \cup \{1,4\}$. All its vertices have clique degree equal to 2, but $G$ is indecomposable. Moreover, for a generic graph $G$ being indecomposable is not equivalent to the fact that $P_{\emptyset}(G)$ is the prime ideal  of the maximum height in the primary decomposition of $J_G$. In fact, consider again $G = C_4$. The subset $T=\{1,3\}$ is a cutset for $G$ and $\height P_T(G) = 4$, whereas $\height P_{\emptyset}(G) = 3$.

\section{Krull dimension of binomial edge ideals of block graphs}

If $G$ is any graph with $n$ vertices, the Krull dimension of $S/J_G$ is given by $\dim S/J_G  = \max_{T \in \mathcal{C}(G)} \{n + c(T) -|T|\}$, and then, in general, to compute it one must investigate all the possible cutsets of $G$. For some classes of graphs, there exists an immediate way to compute the Krull dimension. For example, if $G$ is a complete graph or a graph obtained by gluing free vertices of complete graphs and such that any vertex $v \in V(G)$ is either a free vertex or has $\cdeg(v) =2$, then $\dim S/J_G =n+1$. For a generic block graph $G$, we show an algorithm to compute the Krull dimension of $S/J_G$ in linear time.

From now on, we consider only connected block graphs, since the Krull dimension of $S/J_G$, where $G$ is a graph with $c$ connected components, $G_1, \dots, G_c$, is given by the sum of the Krull dimension of $S_i/J_{G_i}$, with $i=1, \dots, c$ and $S_i = K[x_j,y_j]_{j \in V(G_i)}$. Before showing the aforementioned algorithm, we need some auxiliary results.

\begin{lemma}\label{v with at least 2 leaves}
Let $G$ be a block graph,  $P_T(G) \in \minh(G)$, and $v \in V(G)$. If $v$ belongs to 
\begin{enumerate}
\item exactly two endblocks, then $P_{T\cup\{v\}}(G) \in \minh(G)$; 
\item at least three endblocks, then $v \in T$.
\end{enumerate}
\end{lemma}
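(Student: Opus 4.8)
The plan is to forget the ideals and argue purely combinatorially, using that $\height P_T(G)=n-c(T)+|T|$ for every $T\in\CC(G)$, so that $P_T(G)\in\minh(G)$ precisely when $c(T)-|T|$ equals the maximum $\mu:=\max_{T'\in\CC(G)}\bigl(c(T')-|T'|\bigr)$ occurring in \eqref{Eq:dimension}. If $v\in T$ then (i) is immediate (since $T\cup\{v\}=T$) and (ii) holds, so I would assume $v\notin T$ throughout.

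First I would set up two elementary facts. (a) No cutset contains a free vertex: if $w$ is free it lies in a single block $B_w$, so in $G_{[n]\setminus(T\setminus\{w\})}$ all neighbours of $w$ lie in the clique $B_w$ and deleting $w$ cannot increase the number of components, i.e.\ $c(T\setminus\{w\})\ge c(T)$, so $w$ is redundant; in particular every vertex of an endblock other than its cutpoint lies outside $T$. (b) A greedy reduction: if $U\subseteq V(G)$ is not a cutset, some $i\in U$ has $c(U\setminus\{i\})\ge c(U)$, hence $c(U\setminus\{i\})-|U\setminus\{i\}|>c(U)-|U|$; iterating this (the empty set being vacuously a cutset) yields a cutset $U'\subsetneq U$ with $c(U')-|U'|>c(U)-|U|$.

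The heart of the argument is the following estimate. Let $B_1,\dots,B_k$ be the endblocks of $G$ containing $v$; since $v$ lies in at least two blocks, $v$ is a cutpoint of $G$ and hence the unique cutpoint of each $B_i$, while the other vertices of $B_i$ are free and, by (a), survive in $G_{[n]\setminus T}$. Pick $p_i\in B_i\setminus\{v\}$. On the one hand $v$ is adjacent to each $p_i$ in $G_{[n]\setminus T}$, so all $p_i$ lie in the component of $v$ there; on the other hand deleting $v$ from $G$ detaches $B_i\setminus\{v\}$ from the rest of $G$ (its only attachment point being $v$), so $p_1,\dots,p_k$ lie in $k$ distinct components of $G\setminus\{v\}$, hence of the induced subgraph $G_{[n]\setminus(T\cup\{v\})}$ as well. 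Therefore deleting $v$ shatters one component of $G_{[n]\setminus T}$ into at least $k$ pieces:
\[
c(T\cup\{v\})\ \ge\ c(T)+k-1,\qquad\text{so}\qquad c(T\cup\{v\})-|T\cup\{v\}|\ \ge\ \bigl(c(T)-|T|\bigr)+(k-2).
\]
To conclude: for (ii), $k\ge3$ makes the right-hand side at least $\mu+1$, so either $T\cup\{v\}$ is itself a cutset beating $\mu$ or, if not, (b) produces one — in both cases contradicting $P_T(G)\in\minh(G)$; hence $v\in T$. For (i), $k=2$ makes the right-hand side at least $\mu$; were $T\cup\{v\}$ not a cutset, (b) would again produce a cutset beating $\mu$, so $T\cup\{v\}\in\CC(G)$, and maximality of $\mu$ forces equality, i.e.\ $\height P_{T\cup\{v\}}(G)=\height P_T(G)$ is minimal and $P_{T\cup\{v\}}(G)\in\minh(G)$.

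The step that really needs care is the estimate $c(T\cup\{v\})\ge c(T)+k-1$: the assertion that, inside $G_{[n]\setminus T}$, the vertex $v$ genuinely glues the $k$ endblocks together, so that deleting it separates them. This is exactly where both parts of the hypothesis on the $B_i$ are used — each is an \emph{end}block, so $v$ is its only cutpoint, \emph{and} its remaining vertices are free, so they survive the passage to $G_{[n]\setminus T}$. I would phrase this via the block–cutpoint structure of $G$ and carry out the component count explicitly; the rest is routine bookkeeping with $\height P_T(G)=n-c(T)+|T|$.
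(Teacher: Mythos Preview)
Your proof is correct and shares the paper's core idea: assume $v\notin T$, use that the non-$v$ vertices of each endblock are free (hence survive in $G_{[n]\setminus T}$), and count that deleting $v$ splits the component of $v$ into at least $k$ pieces, so that $c(T\cup\{v\})-|T\cup\{v\}|\ge (c(T)-|T|)+(k-2)$. The paper does the same thing, writing out the components of $G_{[n]\setminus(T\cup\{v\})}$ explicitly as $B_1\setminus\{v\},\dots,B_r\setminus\{v\},G_1',G_2,\dots,G_c$ and reading off the height comparison.

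The one substantive difference is how you handle whether $T\cup\{v\}$ is a cutset. The paper never checks this: in case~(ii) it is unnecessary, since $P_{T\cup\{v\}}(G)$ is in any event a prime containing $J_G$, and strictly smaller height already forces a minimal prime of smaller height beneath it; in case~(i), where the heights coincide, the paper tacitly uses that a prime over $J_G$ of minimum possible height is automatically minimal, hence of the form $P_{T'}(G)$ for some $T'\in\CC(G)$, and the shape of these ideals forces $T'=T\cup\{v\}$. Your greedy reduction~(b) replaces this algebraic step by a purely combinatorial one: if $T\cup\{v\}$ were not a cutset, iteratively pruning redundant vertices strictly increases $c(\cdot)-|\cdot|$ and eventually produces a cutset beating $\mu$, which is impossible. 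This keeps the entire argument at the level of graph combinatorics and makes explicit (along with your fact~(a)) a point the paper leaves implicit.
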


\begin{proof}
Let $P_T(G) \in \minh(G)$ and let $v$ belong to $r$ endblocks, $B_1, \dots, B_r$, with $r \geq 2$, and let $G_1, \dots, G_c$ be the connected components of  $G_{[n] \setminus T}$, then $\mathrm{height } P_T(G) = n - c+ |T|$. Suppose that $v \not \in T$. Without loss of generality, we can suppose $v \in G_1$. The connected components induced by $T\cup\{v\}$ are $B'_1,\dots,B'_r, G'_1, G_2, \dots, G_c$, where $B'_i = B_i \setminus \{v\}$ for $i=1,\dots,r$ and $G'_1 = G_1 \setminus \{B_1, \dots, B_r\}$. If $r=2$ and $G'_1 = \emptyset$, then $\mathrm{height } P_{T\cup\{v\}}(G) = \mathrm{height } P_T (G)$, and then also $P_{T\cup\{v\}}(G) \in \minh(G)$. If $r \geq 3$ or $r=2$ and $G'_1 \neq \emptyset$, the number of connected components induced by $T\cup\{v\}$ is at least $r+c$ and hence it is greater than or equal to $c+2$. Thus, $\mathrm{height } P_{T\cup\{v\}}(G) \leq n -(c+2) + (|T|+1) < \mathrm{height } P_T (G)$, which is in contradiction to the minimality of $P_T(G)$. 
\end{proof}

\begin{remark}\label{T not vert of degree 2}
Let $G$ be a block graph and $T \in \mathcal{C}(G)$ such that $P_T(G) \in \minh(G)$. If  $\{v_1, \dots, v_r\} \subseteq T$ is the set of all the vertices in $T$ with clique degree equal to 2, by Proposition \ref{prop:sum of invar}.(2), $P_{T \setminus \{v_1, \dots, v_r\}}(G) \in \minh(G)$.
\end{remark}

\begin{lemma}\label{Lemma:v union min = min}
Let $G$ be a block graph and $v \in V(G)$ be a cutpoint. If 
\begin{enumerate}
\item $v$ belongs to at least 2 endblocks of an indecomposable component of $G$,
\item $P_{T'}(H) \in \minh(H)$, where $T' \in \CC(H)$ and $H$ is the graph obtained from $G$ by removing $v$ and the endblocks to which $v$ belongs
\end{enumerate}
then $P_{T' \cup \{v\}}(G) \in \minh(G)$.
\end{lemma}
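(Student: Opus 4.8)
The plan is to reduce the statement to Lemma~\ref{v with at least 2 leaves} by carefully tracking how the connected components of $G_{[n]\setminus T}$ relate to those of $H_{[V(H)]\setminus T'}$. Write $B_1,\dots,B_r$ for the endblocks of the indecomposable component of $G$ containing $v$ to which $v$ belongs, so that by hypothesis $r\ge 2$, and $H = G$ with $v$ and $B_1,\dots,B_r$ removed. Since $v$ is a cutpoint lying in $r\ge 2$ endblocks, removing $v$ disconnects $G$; the components of $G_{[n]\setminus\{v\}}$ are $B_1\setminus\{v\},\dots,B_r\setminus\{v\}$ together with $H$ (possibly $H$ is disconnected, in which case one argues component-wise, but the essential case is $H$ connected). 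First I would compute $\height P_{T'\cup\{v\}}(G)$ directly: removing $T'\cup\{v\}$ from $G$ produces the components $B_1\setminus\{v\},\dots,B_r\setminus\{v\}$ (each a clique, hence each one connected component contributing nothing to cliques being split) plus whatever components $T'$ cuts $H$ into. So if $T'$ induces $c'$ components on $H$, then $T'\cup\{v\}$ induces $r+c'$ components on $G$, and $|T'\cup\{v\}| = |T'|+1$. With $n = |V(G)|$ and $|V(H)| = n - 1 - \sum_{i}(|B_i|-1) = n - 1 - \sum_i(|B_i|-1)$, one gets
\[
\height P_{T'\cup\{v\}}(G) \;=\; n - (r+c') + (|T'|+1),
\]
while $\height P_{T'}(H) = |V(H)| - c' + |T'|$ is minimal by hypothesis.

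Next I would establish the reverse inequality, i.e.\ that no cutset of $G$ gives a strictly smaller height than $T'\cup\{v\}$. The key observation is that by Lemma~\ref{v with at least 2 leaves}, any $P_T(G)\in\minh(G)$ may be assumed to contain $v$ (if $r\ge 3$ this is forced; if $r=2$ we may replace $T$ by $T\cup\{v\}$ while staying in $\minh(G)$). So fix $P_T(G)\in\minh(G)$ with $v\in T$. Then $T\setminus\{v\}$, restricted to the components $B_1\setminus\{v\},\dots,B_r\setminus\{v\}$, can only increase height (splitting a clique is never height-decreasing — this is the standard fact that $P_\emptyset$ on a clique is the unique minimal prime, cf.\ the indecomposability discussion), so without loss of generality $T\cap (B_i\setminus\{v\}) = \emptyset$ for all $i$, i.e.\ $T\setminus\{v\}\subseteq V(H)$. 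One then checks $T\setminus\{v\}\in\CC(H)$ (here block-graph structure is used, as in the proof of Lemma~\ref{v with at least 2 leaves} and Proposition~\ref{prop:indec TFAE}, to see that dropping vertices keeps the cutset property), and the height computation above runs in reverse: $\height P_T(G) = n - (r + c'') + (|T\setminus\{v\}| + 1)$ where $c''$ is the number of components $T\setminus\{v\}$ induces on $H$, which equals $|V(H)| - (\,|V(H)| - c'' + |T\setminus\{v\}|\,) + \dots = \dots \ge \height P_{T'}(H)$ by minimality of $P_{T'}(H)$ in $\minh(H)$. Chaining the (in)equalities gives $\height P_{T'\cup\{v\}}(G) = \height P_T(G)$, hence $P_{T'\cup\{v\}}(G)\in\minh(G)$.

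The main obstacle I anticipate is the bookkeeping around the case where $H$ is \emph{disconnected} and where $v$ also belongs to non-endblocks of $G$ (the hypothesis only says $v$ lies in at least two endblocks of its indecomposable component, not that all blocks at $v$ are endblocks). In that situation, removing $v$ and the endblocks $B_1,\dots,B_r$ leaves $H$ possibly with several components, and one must argue that an optimal $T'$ for $H$ — together with $v$ — remains optimal for $G$, distributing $T'$ correctly among the components and invoking additivity of height over connected components. A secondary subtlety is verifying that $T\setminus\{v\}$ is genuinely a cutset of $H$ and not just a vertex subset: this requires the same block-graph lemma used implicitly above, namely that in a block graph one can always pare a non-minimal cutset down. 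I would handle both by first proving the statement for $H$ connected and $v$ incident only to the endblocks, then reducing the general case to this one via Proposition~\ref{prop:sum of invar} applied at the cutpoint $v$.
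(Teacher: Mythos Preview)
Your approach is correct and is essentially the same as the paper's: start from some $P_T(G)\in\minh(G)$ with $v\in T$ (guaranteed by Lemma~\ref{v with at least 2 leaves}), write $T=T_1\cup\{v\}$, express $\height P_T(G)=s+\height P_{T_1}(H)$ for a constant $s$ depending only on $n,|V(H)|,r$, observe that $P_{T_1}(H)$ must achieve minimum height in $H$ (else $T_2\cup\{v\}$ would beat $T$ in $G$), and conclude $\height P_T(G)=s+\height P_{T'}(H)=\height P_{T'\cup\{v\}}(G)$.

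The extra obstacles you flag are lighter than you fear. The worry that $T$ might meet some $B_i\setminus\{v\}$ dissolves immediately: every vertex of an endblock other than its cutpoint is a free vertex, and free vertices never lie in a cutset, so $T\setminus\{v\}\subseteq V(H)$ automatically. The disconnectedness of $H$ and the presence of non-endblocks at $v$ cause no trouble either: the identity $c_G(T)=r+c_H(T_1)$ holds verbatim once $T_1\subseteq V(H)$, regardless of how many pieces $H$ falls into, so the height formula $\height P_T(G)=s+\height P_{T_1}(H)$ with $s=n-|V(H)|-r+1$ goes through uniformly. Finally, you do not actually need to verify that $T_1\in\CC(H)$: the inequality $\height P_{T_1}(H)\ge\height P_{T'}(H)$ follows from the contradiction argument alone (any cutset $T_2$ of $H$ with smaller height would make $T_2\cup\{v\}$ beat $T$), while the reverse inequality is immediate since $P_{T'}(H)\in\minh(H)$. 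The paper's write-up simply suppresses these checks; your plan to treat them explicitly is fine but not strictly necessary.
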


\begin{proof}
Let $T \in \mathcal{C}(G)$ be such that $P_T(G) \in \minh(G)$ and $v \in T$. By Lemma \ref{v with at least 2 leaves}, we know that such $T$ exists. Let $T= T_1 \cup \{v\}$. Let $r \geq 2$ be the number of endblocks to which $v$ belongs, then $c(T) = r +c(T_1)$, where $c(T_1)$ denotes the number of connected components of $H$ induced by $T_1$. It follows that
\begin{align*}
\height P_T(G) &= n-(r+c(T_1))+(1+|T_1|) \\
&= n-V(H)-r+1 + [V(H) -c(T_1)+ |T_1|] \\
&= s + \height P_{T_1}(H)
\end{align*}
where $s = n-V(H)-r+1$. Observe that $P_{T_1}(H) \in \minh(H)$: if there exists $T_2 \in \CC(G)$ such that $\height P_{T_2}(H) < \height P_{T_1}(H)$, then $\height P_{T_2 \cup \{v\}}(G)$ is lower than $\height P_T(G)$, and this is in contradiction to the minimality of $P_T(G)$. Since, by hypothesis, $P_{T'}(H), P_{T_1}(H)$ have the same height, it follows $ \height P_T(G)=  s+  \height P_{T'}(H) = \height P_{T' \cup \{v\}}(G)$, and $P_{T' \cup \{v\}}(G) \in \minh(G)$.
\end{proof}

The following result is the core of the algorithm that allows to compute the Krull dimension of $S/J_G$. 

\begin{theorem}\label{Minheight}
Let $G$ be a block graph and $T=\{v_1,\ldots,v_t\}\in \CC(G)$. We denote by $H_0$ the graph $G$ and by $H_i$ the graph obtained from $H_{i-1}$ by removing $v_i$ and the endblocks to which $v_i$ belongs,  for all $i=1, \dots, t$. If  
\begin{enumerate}
 \item $v_i$ belongs to at least $2$ endblocks of an indecomposable component of $H_{i-1}$, for all $i=1,\ldots,t$,
 \item $H_t$ is decomposable into blocks,
\end{enumerate}
then $P_T(G)\in \minh(G)$.
\end{theorem}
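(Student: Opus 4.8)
The plan is to argue by induction on $t$, peeling off the vertices $v_1,v_2,\dots$ of $T$ one at a time by repeated application of Lemma~\ref{Lemma:v union min = min}.

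For the base case $t=0$ we have $T=\emptyset$ and, by condition~(2), $G=H_0$ is decomposable into blocks; writing $G=G_1\cup\dots\cup G_r$ for its decomposition into indecomposable components, each $G_i$ is a complete graph, so $\CC(G_i)=\{\emptyset\}$. Iterating Proposition~\ref{prop:sum of invar}.(2) along this decomposition, for every $U\in\CC(G)$ one gets $\height P_U(G)=\sum_{i=1}^r\height P_{U_i}(G_i)$ for suitable $U_i\in\CC(G_i)$; since $U_i=\emptyset$ is forced, $\height P_U(G)=\sum_i\height P_\emptyset(G_i)=\height P_\emptyset(G)$ does not depend on $U$, whence $P_\emptyset(G)\in\minh(G)$. (Equivalently, one may invoke the fact noted at the beginning of this section that such graphs have $\dim S/J_G=n+c$.)

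For the inductive step ($t\ge 1$) I would put $v:=v_1$ and $H:=H_1$. Note first that $v_1$ is a cutpoint of $G$: it lies in the cutset $T$, and in a block graph every vertex of a cutset is an inner vertex, since a free vertex has all its neighbours inside a single maximal clique and hence cannot separate $G_{[n]\setminus T}$. The crucial preliminary step is to check that $T':=T\setminus\{v_1\}=\{v_2,\dots,v_t\}$ is a cutset of $H$. For this I would argue as in the proofs of Lemma~\ref{v with at least 2 leaves} and Lemma~\ref{Lemma:v union min = min}: no $v_j$ with $j\ge 2$ can lie in an endblock of $v_1$ that is deleted when passing from $G$ to $H$, because the vertices of such an endblock other than $v_1$ are free vertices of $G$ (the endblock having $v_1$ as its only cutpoint), whereas every element of the cutset $T$ is a cutpoint. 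Consequently each deleted endblock of $v_1$, with $v_1$ removed, is a full connected component of $G_{[n]\setminus U}$ for every $U$ with $\{v_1\}\subseteq U\subseteq T$, so forming $H$ simply removes a fixed number $r$ of such components, i.e.\ $c_H(U\setminus\{v_1\})=c_G(U)-r$ for all such $U$. Taking $U=T$ and $U=(T\setminus\{v_j\})\cup\{v_1\}$ and using $c_G(T\setminus\{v_j\})<c_G(T)$, we obtain $c_H(T'\setminus\{v_j\})<c_H(T')$ for every $v_j\in T'$, that is, $T'\in\CC(H)$.

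With $T'\in\CC(H)$ at hand, the standing hypotheses transfer verbatim to the pair $(H,T')$: running the same deletion process on $H$ along $v_2,\dots,v_t$ reproduces exactly the graphs $H_2,\dots,H_t$, so condition~(1) holds for $i=2,\dots,t$ and condition~(2) (decomposability of $H_t$ into blocks) is unchanged. As $|T'|=t-1$, the induction hypothesis yields $P_{T'}(H)\in\minh(H)$; applying Lemma~\ref{Lemma:v union min = min} to the cutpoint $v_1$ and the graph $H$ — its first hypothesis being condition~(1) of the theorem for $i=1$, its second being $T'\in\CC(H)$ together with $P_{T'}(H)\in\minh(H)$ — gives $P_{T'\cup\{v_1\}}(G)=P_T(G)\in\minh(G)$, which closes the induction. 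The one genuinely delicate point is the claim $T'\in\CC(H)$: one must keep careful track of how the connected components of $G_{[n]\setminus T}$ and of $G_{[n]\setminus(T\setminus\{v_j\})}$ behave under the removal of $v_1$ and the endblocks attached to it, and in particular exclude the degenerate case in which some $v_j$ with $j\ge 2$ sits inside a deleted endblock; everything else is a routine induction powered by Lemma~\ref{Lemma:v union min = min}.
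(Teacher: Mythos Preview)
Your argument is correct and follows essentially the same route as the paper: induction on $t$ with the inductive step driven by Lemma~\ref{Lemma:v union min = min}. The only structural difference is that you start the induction at $t=0$ (using Proposition~\ref{prop:sum of invar}(2) and the observation that a graph decomposable into blocks has all minimal primes of equal height), whereas the paper starts at $t=1$ and handles that case directly via Lemma~\ref{v with at least 2 leaves} and Remark~\ref{T not vert of degree 2}; your choice is arguably cleaner and avoids this extra detour. You also make explicit the verification that $T'=\{v_2,\dots,v_t\}\in\CC(H_1)$, a point the paper's proof uses implicitly but does not justify---your counting argument $c_H(U\setminus\{v_1\})=c_G(U)-r$ is the right way to see it.
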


\begin{proof}
We use induction on $t$. Let $t=1$. Consider $T=\{v_1\} \in \mathcal{C}(G)$, with $v_1 \in V(G)$ that belongs to at least $2$ endblocks of an indecomposable component of $G$ and $H_1$ is decomposable into blocks. By Lemma \ref{v with at least 2 leaves} and Remark \ref{T not vert of degree 2}, there exists a cutset $T'$ that contains $v_1$ and no vertices of clique degree equal to 2 such that $P_{T'}(G) \in \minh(G)$.  Since $H_1$ is decomposable into blocks, all the non-free vertices of $H_1$ have clique degree equal to 2, then $T' = \{v_1\} =T$ and $P_T(G) \in \minh(G)$.

\noindent Let $t >1$.  Consider $T=\{v_1,\ldots,v_t\}\in \CC(G)$. The vertex $v_1$ belongs to at least 2 endblocks of an indecomposable component of $G$ and, by induction hypothesis, $P_{T'}(H_1) \in \minh(H_1)$, where $T' = \{v_2, \dots, v_t\}$. By Lemma \ref{Lemma:v union min = min}, $P_{T' \cup \{v_1\}}(G) \in \minh(G)$. 
\end{proof}

\begin{theorem}[Algorithm: Krull Dimension of binomial edge ideals of block graphs]  \label{Alg for Krull dim}
\ 
\begin{itemize}
\item Input: A connected block graph $G$ over $[n]$.
\item Output: Krull dimension of $S/J_G$.
\end{itemize}
\begin{enumerate}[{\footnotesize \hspace{0.08cm} 1.}]
\item $\dim := n+1$; 
\item $\mathcal{G} := \{G\}$;
\item for every graph $H \in \mathcal{G}$
\item \hspace{0.4cm} $\mathcal{G} := \mathcal{G} \setminus \{H\}$; 
\item \hspace{0.4cm} decompose $H$ into its indecomposable subgraphs $\mathcal{I}=\{G_1,\dots,G_r\}$; 
\item \hspace{0.4cm} remove from $\mathcal{I}$ the graphs which are blocks; 
\item \hspace{0.4cm} for every graph  $G_i \in \mathcal{I}$ 
\item \hspace{0.8cm} take $v \in V(G_i)$ such that $v$ belongs to at least 2 endblocks; 
 \item \hspace{0.8cm} $\dim := \dim + \cdeg(v) - 2$;
\item \hspace{0.8cm} $\mathcal{G} := \mathcal{G} \cup \{H_v\}$;
\end{enumerate}
where $H_v$ denotes the graph obtained from $G_i$ by removing $v$ and the endblocks to which $v$ belongs. 
\end{theorem}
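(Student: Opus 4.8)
The plan is to establish two things: that the algorithm terminates, and that the value it returns equals $\dim S/J_G$. For \textbf{termination}, I would note that $\mathcal{G}$ only ever grows by replacing a graph $H$ by the finitely many graphs $H_v$ attached to its non-complete indecomposable components $G_i$, and that $H_v$ is obtained from $G_i$ by deleting at least the vertex $v$, so $|V(H_v)| < |V(G_i)| \le |V(H)|$; hence the tree of spawned graphs rooted at $G$ has strictly decreasing vertex-number along every branch and finite branching, so it is finite. One must also check that step~8 can always be carried out, i.e. that a non-complete indecomposable block graph $G_i$ has a vertex lying in at least two endblocks. I would prove this on the block--cutpoint tree $\mathcal T$ of $G_i$: by Proposition~\ref{prop:indec TFAE} every cutpoint of $G_i$ has $\cdeg \ge 3$, so every cutpoint-node of $\mathcal T$ has degree $\ge 3$; since a tree has $2+\sum_{\deg\ge 3}(\deg-2)$ leaves, $\mathcal T$ has at least $2+(\#\text{cutpoint-nodes})$ leaves, each a (block-)node of degree $1$, i.e. an endblock, adjacent to exactly one cutpoint-node; by pigeonhole some cutpoint-node is adjacent to at least two such leaves, giving the desired vertex.

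For \textbf{correctness} I would argue by strong induction on $|V(H)|$ that, for every (possibly disconnected) block graph $H$, the total amount $\mathrm{Alg}(H)$ added to the variable $\dim$ while $H$ and all the graphs it spawns are processed equals $\dim S/J_H - |V(H)| - c(H)$, where $c(H)$ is the number of connected components of $H$. Applied to the connected input $G$ this is exactly the asserted output $n+1+\mathrm{Alg}(G)=\dim S/J_G$. The base case is $H$ a single complete graph, where $\mathrm{Alg}(H)=0$ and $\dim S/J_H=|V(H)|+1$. If $H$ is disconnected, processing $H$ amounts to processing each component separately, so $\mathrm{Alg}$ is additive over components and the claim follows from the inductive hypothesis together with the additivity of $\dim S/J_{(\,\cdot\,)}$ and of $|V(\,\cdot\,)|$ over components.

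If $H$ is connected and not complete, decompose it into indecomposable components $G_1,\dots,G_r$, say with $G_1,\dots,G_m$ non-complete. Iterating Proposition~\ref{prop:sum of invar}.(1) and using that each of the $r-1$ gluings identifies one vertex yields
\[
\dim S/J_H - |V(H)| - 1 \;=\; \sum_{i=1}^{m}\bigl(\dim S/J_{G_i} - |V(G_i)| - 1\bigr),
\]
the complete components contributing $0$. Since the algorithm gives $\mathrm{Alg}(H)=\sum_{i=1}^{m}\bigl((\cdeg(v_i)-2)+\mathrm{Alg}(H_{v_i})\bigr)$ and, by the inductive hypothesis, $\mathrm{Alg}(H_{v_i})=\dim S/J_{H_{v_i}}-|V(H_{v_i})|-c(H_{v_i})$, it suffices to prove for each $i$ the identity
\[
\dim S/J_{G_i} - |V(G_i)| - 1 \;=\; (\cdeg(v_i)-2)+\dim S/J_{H_{v_i}}-|V(H_{v_i})|-c(H_{v_i}).
\]
Here I would combine two facts: first, deleting the cutpoint $v_i$ from $G_i$ leaves exactly $\cdeg(v_i)$ pieces, one per block through $v_i$, and further deleting the $r_i$ endblocks through $v_i$ removes $r_i$ of them, so $c(H_{v_i})=\cdeg(v_i)-r_i$; second, by Lemma~\ref{Lemma:v union min = min} applied with $G=G_i$ and $H=H_{v_i}$ (note $v_i$ is not a gluing vertex, as $\cdeg_{G_i}(v_i)\ge 2$), a minimum-height minimal prime of $H_{v_i}$ extends by $v_i$ to one of $G_i$, and the height computation in that proof together with the fact that a minimum-height minimal prime realizes the Krull dimension gives $\dim S/J_{G_i}=|V(G_i)|-|V(H_{v_i})|+r_i-1+\dim S/J_{H_{v_i}}$. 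Substituting $c(H_{v_i})=\cdeg(v_i)-r_i$ into the last equation turns it into the displayed identity, closing the induction.

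The \textbf{main obstacle} is the bookkeeping that reconciles the increment $\cdeg(v_i)-2$ produced by step~9 with the increment $r_i-2$ that Lemma~\ref{Lemma:v union min = min} naturally delivers: the gap $\cdeg(v_i)-r_i$ is exactly the number of connected components of $H_{v_i}$, which is why the invariant that must be carried through the induction is $\dim S/J_H-|V(H)|-c(H)$ rather than $\dim S/J_H-|V(H)|-1$; a secondary subtlety is the combinatorial existence statement used for termination. One could instead feed the whole family of chosen vertices, in a suitable order, into Theorem~\ref{Minheight} to see directly that the assembled cutset $T$ lies in $\minh(G)$ and then check $\dim S/J_G=n+c(T)-|T|$ equals the returned value; this needs the same component count but also a careful linearization of the spawn order.
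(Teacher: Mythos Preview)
Your argument is correct, and it differs from the paper's in a meaningful way. The paper proves correctness by observing that the vertices $v$ chosen in Line~8 assemble (after a suitable linear ordering) into a cutset $T$ satisfying the hypotheses of Theorem~\ref{Minheight}, hence $P_T(G)\in\minh(G)$; the running total is then identified, somewhat informally, with $n+c(T)-|T|$. You avoid Theorem~\ref{Minheight} altogether and instead carry the invariant $\mathrm{Alg}(H)=\dim S/J_H-|V(H)|-c(H)$ through a strong induction, reducing to indecomposable pieces via Proposition~\ref{prop:sum of invar}(1) and handling one step via the height identity inside the proof of Lemma~\ref{Lemma:v union min = min}. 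Your approach makes the bookkeeping completely explicit---in particular, it isolates why the increment is $\cdeg(v)-2$ rather than $r-2$, the discrepancy being exactly $c(H_v)$---and it also supplies a proof (via the block--cutpoint tree and pigeonhole) that Line~8 can always be executed, a point the paper leaves implicit. The paper's route is shorter and more conceptual once Theorem~\ref{Minheight} is in hand, but it requires ordering the chosen vertices compatibly with the nested deletions and checking the component count globally; you in fact sketch this alternative in your last paragraph, and it is precisely what the paper does.
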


\begin{proof}
The aim of the algorithm is to compute the Krull dimension by finding a cutset $T$ such that $P_T(G) \in \minh(G)$. In particular, after a finite number of steps we obtain a cutset $T = \{v_1, \dots, v_t\}$ that fulfils the hypothesis of Theorem \ref{Minheight}, and then $P_T(G) \in \minh(G)$. 
Now we explain in detail the algorithm. \\
Line 1. We set $\dim = n+1$. This is the case when the graph $G$ is a block or is decomposable into blocks, that is $T = \emptyset$.\\
Line 2. We denote by $\mathcal{G}$ the set of graphs that are to consider still. \\
Lines 3-4. We consider each graph $H \in \mathcal{G}$. The algorithm finishes when $\mathcal{G}$ is empty. \\
Lines 5-6. We decompose $H$ into its indecomposable components $G_1, \dots, G_r$. These subgraphs are the elements of the set $\mathcal{I}$. This is equivalent to do away with the vertices of clique degree 2 (see Remark \ref{T not vert of degree 2}). Now,  by a branch and bound strategy we study each indecomposable subgraphs of $H$.  We discard the blocks since their vertices are free vertices and then they do not belong to $T$.\\
Lines 7-8. For every subgraph $G_i \in \mathcal{I}$, since $G_i$ is indecomposable there exists a vertex $v$ that belongs to at least $2$ endblocks. By Lemma \ref{v with at least 2 leaves}, we assume $v \in T$. \\
Line 9. We update the Krull dimension: the number of connected components induced by $v$ in $G_i$ is exactly its clique degree. One of these components has been already considered, when we set $\dim = n+1$ in the Line 1. Therefore, the contribute of $v$ is equal to $\cdeg(v)-1$ less the cardinality of the cutset, which is 1.\\
Line 10. We remove from $G_i$ the vertex $v$ and the endblocks which contain $v$, and we add this new graph $H_v$ in $\mathcal{G}$, the set of graphs to consider still.

The wanted $T$ consists of all the vertices $v$ considered in Line 8. Observe that, by construction, any $v \in T$ satisfies the condition (1) of Theorem \ref{Minheight}, and the condition (2) holds at the end of the algorithm, when $\mathcal{G} = \emptyset$.
Moreover, the algorithm finishes after a finite number of steps: in Line 4, we remove a graph $H$ from $\mathcal{G}$ but we add some new graphs $H_v$ in $\mathcal{G}$ in Line 10. For any of these $H_v$, it holds $|V(H_v)| < |V(H)|$, hence after a finite number of iterations the new graphs in Line 10 will be either blocks, and then they will be discarded in Line 6, or empty graphs. 
\end{proof}

We highlight that the above algorithm works also for disconnected graphs: it is sufficient to set $\dim:=n+c$ in Line 1, where $c$ is the number of connected components of $G$. 

We are going to show that the Krull dimension of $S/J_G$ can be computed with a unique visit of $G$ by a recursive function, named $\IsInT$. The cost of traversing a graph $G$ is $\mathcal{O}(|V(G)|+|E(G)|)$ (see \cite[Section 22]{CLRS}). This implies that the Algorithm \ref{Alg for Krull dim} can be implemented through a procedure which is linear with respect to the number of vertices and edges of $G$, without any decomposition. The function $\IsInT$ constructs a $T \in \mathcal{C}(G)$ that fulfils the conditions (1) and (2) of Theorem \ref{Minheight}, and then $P_T(G) \in \minh(G)$. For the sake of simplicity, in the following let $G$ be a tree. We recall that a vertex $v$ is called a \textit{leaf} of a tree if $\cdeg(v) = 1$.

\begin{enumerate}[{\footnotesize \hspace{0.08cm} 1.}]
\item $\IsInT$ 
\item if $v$ is a leaf then 
\item \hspace{0.4cm} return 0 
\item else 
\item  \hspace{0.4cm} degree := $\cdeg(v)$; 
\item  \hspace{0.4cm} childrenInT := 0; 
\item  \hspace{0.4cm} for every children $w$ of $v$ 
\item  \hspace{0.8cm} childrenInT := childrenInT + $\IsInTw$; 
\item  \hspace{0.4cm} degree := degree - chidrenInT; 
\item  \hspace{0.4cm} if degree > 2 then 
\item  \hspace{0.8cm} $c_T$ :=  $c_T$ + degree - 2; 
\item  \hspace{0.8cm} return 1
\item  \hspace{0.4cm} else 
\item  \hspace{0.8cm} return 0
\end{enumerate}

Even if the algorithm works for any undirected tree, we assign an orientation given by the visit of the tree itself: the children of a given vertex are its adjacent vertices that have not been visited yet.
The purpose of $\IsInTv$ is twofold: on one side, starting from any vertex $v \in V(G)$, it checks if $v$ belongs to $T$ and in this case it returns 1, otherwise 0, on the other side it computes $c(T)-|T|$, which is saved in $c_T$.
For a vertex $v$ being in $T$ depends on its children that are in $T$, and on its degree. The latter is given by the initial degree less the number of children of $v$ that are in $T$ (Line 9). In particular, $v \in T$  if at least 2 of its children are not in $T$ and its degree is greater than 2 (Line 10).  

To compute the Krull dimension of $S/J_G$, it is sufficient to call the function $\IsInTv$, where $v$ is any vertex of $G$ and $c_T$ is a global variable set to 1, and then $\dim S/J_G = n+c_T$.


We have implemented this procedure for trees using CoCoA version 4.7 and it is freely downloadable  on \cite{MR}.

\section{Regularity bounds for binomial edge ideals of block graphs}
The main result of this section is the lower bound for the Castelnuovo-Mumford regularity of binomial edge ideals of block graphs (Theorem \ref{The:LowerBound}). To reach our result, we compute the regularity and the superextremal Betti numbers of special block graphs, called \textit{flower graphs}.

Let $M$ be a finitely generated graded $S$-module. A Betti number $\beta_{i,i+j}(M) \neq 0$ is called \textit{extremal} if $\beta_{k,k+\ell} =0$ for all pairs $(k,\ell) \neq (i,j)$, with $k \geq i, \ell \geq j$. 
Let $q=\reg M$ and $p=\mathrm{projdim} M$, then there exist unique numbers $i$ and $j$ such that
$\beta_{i,i+q}(M)$ and $\beta_{p,p+j}(M)$ are extremal Betti numbers. We call them the \textit{distinguished extremal Betti numbers} of $M$.
Let $k$ be the maximal integer $j$ such that $\beta_{i,j} \neq 0$ for some $i$. It is clear that $\beta_{i,k}(M)$ is an extremal Betti number for all i with $\beta_{i,k} \neq 0$, and that there is at least one such $i$. These Betti numbers are distinguished by the fact that
they are positioned on the diagonal $ \{ (i,k-1)| i=0,\dots,k \}$ in the Betti
diagram, and that all Betti numbers on the right lower side of the diagonal are zero. The Betti numbers $\beta_{i,k}$, for $i=0, \dots, k$, are called \textit{superextremal}, regardless of whether they are zero or not. We refer the reader to \cite[Chapter 11]{HH-book} for further details.\\

Let $G$ be a graph. We denote by $i(G)$ the number of inner vertices of $G$ and by $f(G)$ the number of free vertices of $G$. 

\begin{definition}\label{Def:BigFlower}
A flower graph $F_{h,k}(v)$ is a connected block graph constructed by joining $h$ copies of the cycle graph $C_3$ and $k$ copies of the bipartite graph $K_{1,3}$ with a common vertex $v$, where $v$ is one of the free vertices of $C_3$ and of $K_{1,3}$, and $\cdeg(v) \geq 3$.
\end{definition}

We observe that any flower graph $F_{h,k}(v)$ has $2h+3k+1$ vertices and $3(h+k)$ edges. The clique degree of $v$ is given by $h+k$, and the number of inner vertices is $i(F_{h,k}(v))=k+1$ and all of them are cutpoints for $F_{h,k}(v)$. When it is unnecessary to make explicit the parameters $h$ and $k$, we refer to $F_{h,k}(v)$ as $F(v)$.

\begin{figure}[h]
\begin{center}
\setlength{\unitlength}{0.4cm}
\begin{picture}(8,9)
\newsavebox{\Tri}

\savebox{\Tri}
  (04,03)[bl]{
  \put(00,00){\circle*{.3}}
  \put(04,00){\circle*{.3}}
  \put(02,03){\circle*{.3}}

  \put(00,00){\line(2,3){2}}
  \put(00,00){\line(1,0){4}}
  \put(02,03){\line(2,-3){2}}
}

\put(00,03){\usebox{\Tri}}
\put(04,03){\usebox{\Tri}}

\put(04,03){\line(-1,-1){2}}
\put(02,01){\line(-1,0){2}}
\put(02,01){\line(0,-1){2}}

\put(04,03){\line(1,-1){2}}
\put(06,01){\line(1,0){2}}
\put(06,01){\line(0,-1){2}}

\put(02,01){\circle*{.3}}
\put(00,01){\circle*{.3}}
\put(02,-01){\circle*{.3}}
\put(06,01){\circle*{.3}}
\put(08,01){\circle*{.3}}
\put(06,-01){\circle*{.3}}

\put(02.8,6.5){\circle*{.2}}
\put(03.6,6.8){\circle*{.2}}
\put(04.4,6.8){\circle*{.2}}
\put(05.2,6.5){\circle*{.2}}

\put(02.8,-01.5){\circle*{.2}}
\put(03.6,-01.8){\circle*{.2}}
\put(04.4,-01.8){\circle*{.2}}
\put(05.2,-01.5){\circle*{.2}}

\put(03.8,1.9){\boldmath{$v$}}
\end{picture}

\end{center}
\vspace{0.5cm}
\caption{A flower graph $F_{h,k}(v)$}\label{Big Flower}
\end{figure}

\begin{remark}\label{decomp cactus}
Let $G$ be a flower graph $F(v)$. By the result \cite[Corollary 1.5]{R2}, $G = J_{G'} \cap Q_v$
where $G'$ is the graph obtained from $G$ by connecting all the vertices adjacent to $v$, and $Q_v = \bigcap_{ T \in \mathcal{C}(G), v \in T} P_T(G)$. We observe that in this case $Q_v = (x_v,y_v) + J_{G''}$, where $G''$ is obtained from $G$ by removing $v$, and then 
\[
J_G = J_{G'} \cap ((x_v,y_v) + J_{G''}).
\]
\end{remark}

Before stating the distinguished extremal Betti numbers of the binomial edge ideal of a flower graph, we need the following remark. 

\begin{remark}\label{betti numb for disconnected graph}
Let $G$ be a disconnected block graph with $G_1, \dots, G_r$ its connected components. If all the $G_j$ have precisely one extremal Betti number, $\beta_{n_j-1,n_j+i(G_j)}(S_j/J_{G_j})$, for any $j=1,\dots, r$, with $S_j= K[x_i,y_i]_{i \in V(G_j)}$ and $n_j = |V(G_j)|$, then $S/J_G$ has precisely one extremal Betti number and it is given by
$$ \beta_{n-r,n+i(G)}(S/J_G)= \prod_{j=1}^{r} \beta_{n_j-1,n_j+i(G_j)}(S_j/J_{G_j}).$$

\end{remark}

\begin{theorem}\label{main theo}
 Let $G$ be a flower graph $F(v)$. The following are extremal Betti numbers of $S/J_G$:
 \begin{enumerate}
   \item $\beta_{n-1,n+i(G)}(S/J_G)=f(G)-1;$
   \item $\beta_{n-\cdeg(v)+1,n+i(G)}(S/J_G)=1.$
\end{enumerate}
In particular, they are the only non-zero superextremal Betti numbers.
\end{theorem}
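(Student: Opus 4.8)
The plan is to exploit the decomposition of $J_G$ recorded in Remark~\ref{decomp cactus}. Write the $h$ triangles of $F_{h,k}(v)$ as $\{v,a_i,b_i\}$ $(i=1,\dots,h)$ and the $k$ copies of $K_{1,3}$ as having centre $c_j$ and leaves $v,d_j,e_j$ $(j=1,\dots,k)$. Then $G''=G\setminus\{v\}$ is the disjoint union of the $h$ edges $\{a_i,b_i\}$ and the $k$ paths $d_j-c_j-e_j$, i.e. $h$ copies of $K_2$ and $k$ copies of $P_3$; while $G'$ is obtained by turning $N(v)\cup\{v\}$ into a clique, so $G'$ is a complete graph $K_{2h+k+1}$ carrying a pendant pair $\{c_j,d_j\},\{c_j,e_j\}$ at each of $k$ of its vertices, and $G'\setminus\{v\}$ is the same with clique $K_{2h+k}$. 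Since every edge of $G''$ is an edge of $G'$, one has $J_{G'}+(x_v,y_v)+J_{G''}=(x_v,y_v)+J_{G'\setminus\{v\}}$, and Remark~\ref{decomp cactus} gives the Mayer--Vietoris short exact sequence
\[
0\to S/J_G\to S/J_{G'}\oplus S/\bigl((x_v,y_v)+J_{G''}\bigr)\to S/\bigl((x_v,y_v)+J_{G'\setminus\{v\}}\bigr)\to 0 .
\]
I would then read off the Betti numbers of $S/J_G$ from the induced long exact sequence in $\Tor^S_\bullet(-,K)$.

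Next I would compute the relevant homological invariants of the three outer modules. For $S/J_{G''}$: each $J_{K_2}$ and $J_{P_3}$ is a complete intersection of quadrics, so the minimal resolution of $S/J_{G''}$ is a tensor product of Koszul complexes; by Remark~\ref{betti numb for disconnected graph} (or directly) $S/J_{G''}$ has the single extremal Betti number $\beta_{h+2k,\,2h+4k}(S/J_{G''})=1$, hence $\reg S/J_{G''}=h+2k$. As $x_v,y_v$ is a regular sequence on $S/J_{G''}$, tensoring with the Koszul complex on $x_v,y_v$ shows $S/\bigl((x_v,y_v)+J_{G''}\bigr)$ has the single extremal Betti number $\beta_{h+2k+2,\,2h+4k+2}=1$; since $2h+4k+2=n+i(G)$ and $h+2k+2=n-\cdeg(v)+1$, this is statement (ii) for the middle module, and it remains to transport it to $S/J_G$. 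For $S/J_{G'}$ and $S/J_{G'\setminus\{v\}}$: these are block graphs of the shape ``complete graph with $k$ pendant pairs''; by induction on $k$, applying the same Mayer--Vietoris decomposition at one of the vertices $c_j$ (which lowers the number of pendant pairs by one; the base $k=0$ is a complete graph, with Eagon--Northcott resolution), one shows that each has a \emph{unique} extremal Betti number, located at the end of the resolution and equal to $f(\cdot)-1$; equivalently, this is the computation and classification of \cite{HR}. Consequently $\reg S/J_{G'}=\reg S/J_{G'\setminus\{v\}}=k+1$, and, combining the depth formula $\depth S/J_H=|V(H)|+c(H)$ for block graphs $H$ from \cite{EHH} with the Auslander--Buchsbaum formula and the regular sequence $x_v,y_v$, one gets $\projdim S/J_{G'}=n-1$, $\projdim S/\bigl((x_v,y_v)+J_{G''}\bigr)=h+2k+2$, and $\projdim S/\bigl((x_v,y_v)+J_{G'\setminus\{v\}}\bigr)=n$.

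Feeding this into the long exact sequence, whenever $\beta_{i,j}(S/J_G)\neq0$ one of $\beta_{i,j}(S/J_{G'})$, $\beta_{i,j}\bigl(S/((x_v,y_v)+J_{G''})\bigr)$, $\beta_{i+1,j}\bigl(S/((x_v,y_v)+J_{G'\setminus\{v\}})\bigr)$ is nonzero; comparing $j$ with $\projdim+\reg$ of the three modules, which equal $2h+4k+1$, $2h+4k+2$, $2h+4k+2$ respectively, forces $j\le n+i(G)$, and for $j=n+i(G)$ forces $i\in\{\,n-\cdeg(v)+1,\ n-1\,\}$. In the first case the neighbouring $\Tor$'s of the other two modules vanish in that bidegree, so the long exact sequence gives $\beta_{n-\cdeg(v)+1,\,n+i(G)}(S/J_G)=1$. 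In the second case $n-1=\projdim S/J_G$ and the terms of $S/J_{G'}$ and $S/\bigl((x_v,y_v)+J_{G''}\bigr)$ in homological degrees $n-1$ and $n$ vanish in internal degree $n+i(G)$, so the connecting homomorphism yields
\[
\beta_{n-1,\,n+i(G)}(S/J_G)=\beta_{n,\,n+i(G)}\bigl(S/((x_v,y_v)+J_{G'\setminus\{v\}})\bigr)=f(G'\setminus\{v\})-1=2h+2k-1=f(G)-1 .
\]
Since this is nonzero, $n+i(G)$ is the largest internal degree occurring in the resolution of $S/J_G$, so the two Betti numbers just computed are precisely the superextremal ones, which establishes (i), (ii) and the final assertion.

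The step I expect to be the main obstacle is the second one: proving the \emph{exact} value of the extremal Betti number of $S/J_{G'}$ and $S/J_{G'\setminus\{v\}}$ and that it sits at the very end of the resolution (equivalently, that their regularity is attained only in top homological degree), rather than merely bounding their regularity --- this is what forces the auxiliary induction on $k$, or the appeal to \cite{HR} --- together with the bookkeeping in the long exact sequence that guarantees the connecting homomorphisms are isomorphisms in the two critical bidegrees, for which the projective dimensions and regularities of all three modules must be tracked precisely enough to kill every competing $\Tor$ contribution.
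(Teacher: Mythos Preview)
Your proposal is correct and follows essentially the same route as the paper: the same short exact sequence from Remark~\ref{decomp cactus}, the same identification of $G'$, $G''$ and $H=G'\setminus\{v\}$, the same computation of the regularities and projective dimensions of the three outer modules via \cite{EHH} and \cite{HR}, and the same long exact sequence in $\Tor$ to isolate the two bidegrees $(n-\cdeg(v)+1,\,n+i(G))$ and $(n-1,\,n+i(G))$. The only notable difference is that the paper simply quotes \cite[Theorem~2.2]{HR} for part~(i), whereas you recover $\beta_{n-1,\,n+i(G)}(S/J_G)=f(G)-1$ from the connecting homomorphism $T_{n,\,n+i(G)}\bigl(S/((x_v,y_v)+J_{H})\bigr)\cong T_{n-1,\,n+i(G)}(S/J_G)$ together with the value of the unique extremal Betti number of $S/J_H$; this is a harmless (and arguably more self-contained) variant, and the ``main obstacle'' you flag --- that $G'$ and $H$ have their regularity attained only in top homological degree --- is exactly what \cite[Theorem~2.4]{HR} supplies and what the paper invokes as well.
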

\begin{proof}
The fact (1) is proved in \cite[Theorem 2.2]{HR}. As regards (2), we focus on the cutpoint $v$ of $G$. Thanks to the decomposition quoted in Remark \ref{decomp cactus}, we consider the following exact sequence
\begin{equation}\label{Exact}
 0\To S/J_G \To S/J_{G'}\oplus S/((x_v, y_v)+J_{G''})\To S/((x_v,y_v)+J_{H}) \To 0 
\end{equation}
where $G'$ and $G''$ are described in Remark \ref{decomp cactus}, and $H$ is obtained from $G'$ by removing $v$.
We observe that $G'$ and $H$ are block graphs satisfying \cite[Theorem 2.4 (b)]{HR}, with $i(G')=i(H)=i(G)-1$, and then $\reg S/J_{G'} = \reg S/((x_v,y_v)+J_H) = i(G)$. The graph $G''$ has $\cdeg(v)$ connected components $G_1,\ldots, G_{\cdeg(v)}$: all of them are either $K_2$ or paths of length 2, namely $P_2$. The latter are decomposable into two $K_2$ and it holds $\reg S'/J_{P_2} = 2 = i(P_2)+1$, with $S'=K[x_i,y_i]_{i \in V(P_2)}$. Then, by \cite[Theorem 2.4 (b)]{HR} and since the ring $S/((x_v,y_v)+J_{G''})$ is the tensor product of $S_j/J_{G_j}$, with $j=1,\dots, \cdeg(v)$ and $S_j = K[x_i,y_i]_{i \in V(G_j)}$, we have
\[
 \reg \frac{S}{(x_v,y_v)+J_{G''}} =\sum_{j=1}^{\cdeg(v)} \reg \frac{S_j}{J_{G_j}}=\sum_{j=1}^{\cdeg(v)}  (i(G_j)+1)=i(G)-1+\cdeg(v).
\]
We get the following bound on the regularity of $S/J_G$

\begin{eqnarray*}
  \reg S/J_G\hspace{-0.2cm}&\leq &\hspace{-0.2cm}\max\{\reg \frac{S}{J_{G'}},\reg \frac{S}{(x_v, y_v)+J_{G''}}, \reg \frac{S}{(x_v,y_v)+J_{H}} +1\}\\
             &= &\hspace{-0.2cm}\max\{i(G), i(G)-1+\cdeg(v), i(G)+1\}\\
             &= &i(G)-1+\cdeg(v).
\end{eqnarray*}

By \cite[Theorem 1.1]{EHH}, the depth of $S/J_G$ for any block graph $G$ over $[n]$ is equal to $n+c$, where $c$ is the number of connected components of $G$. Since we know the depth of all quotient rings involved in (\ref{Exact}) and by Auslander-Buchsbaum formula, we get $\projdim S/J_G=\projdim S/J_{G'}=\projdim S/((x_v,y_v)+J_H)-1=n-1$, 
and $\projdim S/((x_v,y_v)+J_{G''})=n-\cdeg(v)+1$.

Let $j> i(G)$, then 
\[
T_{m,m+j}(S/J_{G'})=T_{m,m+j}(S/((x_v,y_v)+J_H)) =0 \qquad  \text{for any } m,
\]
and 
\[
T_{m,m+j}(S/((x_v,y_v)+J_{G''})) =0 \qquad \text{for any } m > n-\cdeg(v)+1,
\] 
where $T_{m,m+j}^S(M)$ stands for $\Tor_{m,m+j}^S(M,K)$ for any $S$-module $M$, and $S$ is omitted if it is clear from the context. 
Of course, all the above Tor modules $T_{m,m+j}(-)$ are zero when $j > i(G)-1+\cdeg(v)$. 

Therefore, for $m=n-\cdeg(v)+1$ and $j=i(G)-1+\cdeg(v)$ we obtain the following long exact sequence
\begin{eqnarray*}\label{longexact}
&\cdots&\rightarrow T_{m+1,m+1+(j-1)}(S/((x_v,y_v)+J_H)) \rightarrow T_{m,m+j}(S/J_G) \rightarrow  \\
&\ & T_{m,m+j}(S/J_{G'}) \oplus T_{m,m+j}(S/((x_v, y_v)+J_{G''})) \rightarrow \\
&\ & T_{m,m+j}(S/((x_v,y_v)+J_H)) \rightarrow \cdots
\end{eqnarray*}
In view of the above, all the functors on the left of $T_{m,m+j}(S/J_G)$ in the long exact sequence are zero, and  $T_{m,m+j}(S/J_{G'})=T_{m,m+j}(S/((x_v,y_v)+J_H))=0$ too. It follows 
\begin{equation*}
T_{m,m+j}(S/J_G) \cong T_{m,m+j}(S/((x_v, y_v)+J_{G''})).
\end{equation*}
It means that 
\[
\beta_{n-\cdeg(v)+1,n+i(G)}(S/J_G) = \beta_{n-\cdeg(v)+1,n+i(G)}(S/((x_v, y_v)+J_{G''})).
\] 
We observe that 
\[
T^S_{m,m+j}(S/((x_v, y_v)+J_{G''})) \cong T^{S''}_{m-2,m-2+j}(S''/J_{G''})
\]
where $S''=S/(x_v,y_v)$. Since all the connected components $G_1,\ldots, G_{\cdeg(v)}$ of $G''$ are either a $K_2$ or a path of length 2, the quotient rings $S_j/J_{G_j}$ have an unique extremal Betti number $\beta_{n_j-1,n_j+i(G_j)}(S_j/J_{G_j})$, for $j=1, \dots, \cdeg(v)$ and $n_j=|V(G_j)|$, which is equal to 1. Therefore, by Remark \ref{betti numb for disconnected graph}, we have 
$$\beta_{m-2,m-2+j}(S''/J_{G''}) = \prod_{j=1}^{\cdeg(v)} \beta_{n_j-1,n_j+i(G_j)}(S_j/J_{G_j}) =1.$$
Observe that for $m=n-\cdeg(v)+1$ and $j=i(G)-1+\cdeg(v)$ we get that $m+j=n+i(G)$ is the maximal integer such that $\beta_{i,m+j}(S/J_G) \neq 0$ for some $i$. We want to prove that $\beta_{i,n+i(G)} \neq 0$, only for $i=n-\cdeg(v)+1$ and $i=n-1$. Let $i$ be an integer such that $\beta_{i,n+i(G)} \neq 0$. Since $\projdim S/J_G =n-1$ and $\reg S/J_G \leq i(G) + \cdeg(v) -1$, we have to examine $ n-\cdeg(v)+1 \leq i \leq n-1$. Consider the following long exact sequence
\begin{eqnarray*}\label{longexact}
  &\cdots& \rightarrow T_{i+1,n+i(G)}\left(\frac{S}{(x_v,y_v)+J_H}\right) \rightarrow T_{i,n+i(G)}\left(\frac{S}{J_G}\right) \rightarrow  \\
  &\ & T_{i,n+i(G)}\left(\frac{S}{J_{G'}}\right) \oplus T_{i,n+i(G)}\left(\frac{S}{(x_v, y_v)+J_{G''}}\right) \rightarrow \\
 &\ & T_{i,n+i(G)}\left(\frac{S}{(x_v,y_v)+J_H}\right) \rightarrow   \cdots
\end{eqnarray*}
If $ n-\cdeg(v)+1 < i < n-1$, since $i > \projdim S/((x_v,y_v)+J_{G''})$ and  $n+i(G)-i > \reg S/J_{G'}, \reg S/((x_v,y_v)+J_H)$, it holds $\Tor_{i,n+i(G)}(M) =0$, for  $M\in \{S/J_{G'}, S/((x_v,y_v)+J_{G''}),  S/((x_v,y_v)+J_H)\}$,
and then we can conclude that also $\Tor_{i,n+i(G)}(S/J_G)=0$.
\end{proof}

An immediate consequence of the proof of the Theorem \ref{main theo} is the regularity of any flower graphs $F(v)$, that depends only on the clique degree of $v$ and the number of inner vertices of $F(v)$. 

\begin{corollary}\label{cor:reg flower}
Let $F(v)$ be a flower graph, then 
\[
\reg S/J_{F(v)}=i(F(v)) + \cdeg(v)-1.
\]
\end{corollary}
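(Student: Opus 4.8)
The plan is to read off both inequalities directly from the proof of Theorem \ref{main theo}, so the corollary is essentially bookkeeping. For the upper bound there is nothing new to do: the chain of inequalities established in that proof via the short exact sequence \eqref{Exact}, together with the regularity computations of $S/J_{G'}$, of $S/((x_v,y_v)+J_{G''})$ and of $S/((x_v,y_v)+J_H)$, already yields
\[
\reg S/J_{F(v)} \le i(F(v)) + \cdeg(v) - 1 .
\]

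For the reverse inequality I would invoke part (2) of Theorem \ref{main theo}, which asserts that $\beta_{n-\cdeg(v)+1,\,n+i(F(v))}(S/J_{F(v)}) = 1 \neq 0$. Using $\reg M = \max\{\, j : \beta_{i,i+j}(M)\neq 0 \text{ for some } i \,\}$ and setting $i = n-\cdeg(v)+1$, $i+j = n+i(F(v))$, one gets $j = i(F(v)) + \cdeg(v) - 1$, hence $\reg S/J_{F(v)} \ge i(F(v)) + \cdeg(v) - 1$. Combining the two bounds gives the claimed equality.

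The only point worth making explicit is that $n+i(F(v))$ is genuinely the largest internal degree carrying a nonzero Betti number of $S/J_{F(v)}$ — this is exactly the last assertion of Theorem \ref{main theo} ("they are the only non-zero superextremal Betti numbers") — so that the homological position $i = n-\cdeg(v)+1$ of the extremal Betti number in (2) actually realises the regularity and is not merely a lower estimate. Since there is no real obstacle, an alternative phrasing of the argument is simply: the upper bound is the regularity estimate already obtained inside the proof of Theorem \ref{main theo}, and the matching lower bound follows from the non-vanishing of the single Betti number in part (2) of that theorem.
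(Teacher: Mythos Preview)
Your argument is correct and matches the paper's approach: the paper states the corollary as an ``immediate consequence of the proof of Theorem \ref{main theo}'', and you have spelled out exactly that --- the upper bound from the regularity estimate via the exact sequence \eqref{Exact}, and the lower bound from the nonvanishing Betti number in part (2). One small remark: your final paragraph is unnecessary, since any nonzero $\beta_{i,i+j}$ already forces $\reg \ge j$ regardless of whether $n+i(F(v))$ is the maximal shift; the lower bound needs only the nonvanishing in (2), not the superextremality.
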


If $F(v)$ is an induced subgraph of a block graph $G$, we denote by $\cdeg_F(v)$ the clique degree of $v$ in $F(v)$. Note that if $F(v)$ is the maximal flower induced subgraph of $G$ and all the blocks of $G$ containing $v$ are $C_3$ or $K_{1,3}$, then $\cdeg_F(v) = \cdeg(v)$, otherwise $\cdeg_F(v) < \cdeg(v)$.

\begin{theorem}\label{The:LowerBound}
Let $G$ be an indecomposable block graph and let $F(v)$ be an induced subgraph of $G$. Then
 \[
  \reg S/J_G \geq i(G)+\cdeg_F(v)-1.
 \]
\end{theorem}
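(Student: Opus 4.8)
The plan is to use the well-known fact that Castelnuovo–Mumford regularity of binomial edge ideals does not decrease when passing to an induced subgraph, so that $\reg S/J_G \geq \reg S'/J_{F(v)}$ where $S' = K[x_i,y_i]_{i \in V(F(v))}$. This monotonicity under induced subgraphs is available in the literature (e.g.\ \cite{MM}) and follows from the fact that $J_{F(v)}$ is obtained from $J_G$ by setting to zero the variables attached to $V(G) \setminus V(F(v))$, which is a regular sequence modulo $J_G$ up to the usual polarization/localization arguments; regularity can only drop under such a specialization. Once this is in hand, the bound follows by applying Corollary \ref{cor:reg flower} to the flower graph $F(v)$, which gives $\reg S'/J_{F(v)} = i(F(v)) + \cdeg_F(v) - 1$, together with the observation that $i(G) \geq i(F(v))$. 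The latter inequality needs a small combinatorial argument: every inner vertex of $F(v)$ remains an inner vertex of $G$, since $\cdeg$ can only increase when embedding a block graph as an induced subgraph of a larger one (adding more maximal cliques through a vertex never removes the cliques it already lies in). Hence $i(F(v)) = k+1 \leq i(G)$, and combining the two facts yields $\reg S/J_G \geq i(F(v)) + \cdeg_F(v) - 1 \geq$ ... wait, this gives $i(F(v))$, not $i(G)$, on the right.

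To bridge the gap between $i(F(v))$ and $i(G)$, I would refine the choice of induced subgraph: instead of working with the bare flower $F(v)$, enlarge it to a flower graph $\widetilde F(v)$ that still has $v$ as its central vertex but whose set of inner vertices includes all $i(G)$ inner vertices of $G$. Concretely, since $G$ is a block graph, one can grow $F(v)$ by attaching, at each inner vertex of $G$ not yet captured, a pendant $C_3$ or $K_{1,3}$ realized inside $G$; this keeps the flower structure and makes $i(\widetilde F(v)) = i(G)$ while leaving $\cdeg_F(v)$ unchanged because we do not touch the blocks through $v$. Then $\reg S/J_G \geq \reg S''/J_{\widetilde F(v)} = i(\widetilde F(v)) + \cdeg_F(v) - 1 = i(G) + \cdeg_F(v) - 1$, as desired. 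An alternative, and perhaps cleaner, route is to decompose $G$ itself: remove the flower part through $v$ and use Proposition \ref{prop:sum of invar}.(iii) to add up regularity contributions; but since $G$ is assumed indecomposable this is not directly available, so the induced-subgraph enlargement seems the more robust path.

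The main obstacle I expect is making the enlargement of $F(v)$ inside $G$ rigorous: one must verify that the extra $C_3$'s and $K_{1,3}$'s can genuinely be found as \emph{induced} subgraphs hanging off the appropriate inner vertices of $G$, without accidentally creating extra adjacencies (which in a block graph is automatic, since distinct maximal cliques meet in at most one vertex), and that the resulting graph is still a flower graph in the sense of Definition \ref{Def:BigFlower}, i.e.\ all its positive-clique-degree vertices other than $v$ come from the $K_{1,3}$'s. If Definition \ref{Def:BigFlower} is too rigid to accommodate this — for instance if the paper's flower graphs must have \emph{all} inner vertices adjacent to $v$ — then I would instead prove a standalone lemma bounding $\reg$ below by $i(G) + \cdeg_F(v) - 1$ directly via the same exact-sequence / \Tor-vanishing machinery used in the proof of Theorem \ref{main theo}, localizing the argument around the cutpoint $v$ and bookkeeping the inner vertices of $G$ lying outside the star of $v$ as contributing additively to the top graded shift. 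In either case the numerology $i(\cdot) + \cdeg(v) - 1$ is inherited verbatim from Corollary \ref{cor:reg flower}, so the quantitative part is routine once the structural setup is fixed.
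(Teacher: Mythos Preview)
Your first instinct---apply Matsuda--Murai monotonicity and Corollary~\ref{cor:reg flower}---is exactly how the paper starts, and you correctly spot that this only gives $i(F(v))$ rather than $i(G)$ on the right. The attempted fix, however, does not work. By Definition~\ref{Def:BigFlower} a flower graph $F_{h,k}(v)$ is built by gluing copies of $C_3$ and $K_{1,3}$ \emph{all sharing the vertex $v$}; consequently every block of a flower graph contains $v$, and every inner vertex of a flower graph is either $v$ itself or the middle vertex of one of the $K_{1,3}$'s, hence adjacent to $v$. So if $G$ has an inner vertex $w$ that is not adjacent to $v$ (which is the generic situation for an indecomposable block graph), no flower graph centered at $v$ can have $w$ among its inner vertices, and your enlarged $\widetilde F(v)$ simply cannot exist as a flower graph. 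Attaching pendants ``at each inner vertex of $G$ not yet captured'' produces a block graph that is not a flower, so Corollary~\ref{cor:reg flower} no longer applies and the argument collapses. Your fallback via exact sequences is not a plan but a restatement of the problem.

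The paper closes the gap by moving in the opposite direction: rather than enlarging $F(v)$, it shrinks $G$. One peels off an endblock $B$ of $G$ not lying in $F(v)$, with cutpoint $w$, and passes to the induced subgraph $G'=G\setminus(B\setminus\{w\})$; Matsuda--Murai gives $\reg S/J_G \ge \reg S/J_{G'}$. The bookkeeping is that if $\cdeg(w)>3$ in $G$ then $G'$ is still indecomposable with $i(G')=i(G)$, and induction applies directly; if $\cdeg(w)=3$ then $G'$ becomes decomposable as $G_1\cup G_2$ with $G_2$ a single block, and Proposition~\ref{prop:sum of invar}(iii) gives $\reg S/J_{G'}=\reg S/J_{G_1}+1$ while $i(G_1)=i(G)-1$, so the lost inner vertex is exactly compensated by the extra $+1$. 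This is the missing idea: the discrepancy $i(G)-i(F(v))$ is recovered one unit at a time through decomposability along the way down to $F(v)$, not by manufacturing a larger flower.
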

\begin{proof}
We use induction on the number of blocks of $G$ that are not in $F(v)$. If $G=F(v)$, the statement follows from Corollary \ref{cor:reg flower}. Suppose now $G$ contains properly $F(v)$ as induced subgraph. Since $G$ is connected, there exists an endblock $B$ of $G$ and a subgraph $G'$ of $G$ such that $G=G' \cup B$, $G'$ contains  $F(v)$ as induced subgraph, $V(G')\cap V(B)=\{w\}$, and all the blocks containing $w$ are endblocks, except for the one that is in $G'$. Since $G$ is assumed to be indecomposable, $\cdeg(w) \geq 3$. If $\cdeg(w) = 3$, then $G'$ is decomposable into $G_1 \cup G_2$, and $\reg S/J_{G'} = \reg S/J_{G_1} +\reg S/J_{G_2}$. We may suppose that $G_1$ contains $F(v)$, and then $i(G_1) = i(G)-1$, but $\cdeg_F(v)$ is still the same. Whereas, $G_2$ is a block and $\reg S/J_{G_2} = 1$. Then by using induction, we may assume that $\reg S/J_{G_1} \geq i(G)+\cdeg_F(v)-2$. Therefore, 
\[
\reg S/J_{G'}= \reg S/J_{G_1}+\reg S/J_{G_2} \geq i(G)+\cdeg_F(v)-1.
\]
If $\cdeg(w) > 3$, then $i(G')=i(G)$ and $\cdeg_F(v)$ is still the same. Then, by using induction on the number of blocks of $G$, we may assume $\reg S/J_{G'} \geq i(G)+\cdeg_F(v)-1$.
By \cite[Corollay 2.2]{MM} of Matsuda and Murai, one have that 
\[
\reg S/J_{G} \geq \reg S/J_{G'}.
\]
and then $\reg S/J_{G} \geq i(G)+\cdeg_F(v)-1$, as desired.
\end{proof}

\begin{definition} 
Let $G$ be a block graph. If $G$ has no flower graphs as induced subgraphs then $G$ is called {\em flower-free}.
\end{definition}

We are ready to state the following bound for the regularity for any binomial edge ideal of block graphs.

\begin{corollary}\label{Cor:LowerBound}
Let $G$ be a connected block graph which is not an isolated vertex. 
\begin{enumerate}
\item If $G$ is a flower-free graph, then $\reg S/J_G= i(G)+1$.
\item If $G$ contains $r \geq 1$ flower graphs $F_1(v_1),\dots,F_r(v_r)$ as induced subgraphs, then $\displaystyle \reg S/J_G \geq i(G)+\max_{i=1,\dots,r}\{\cdeg_{F_i}(v_i)\}-1$.
\end{enumerate}

\end{corollary}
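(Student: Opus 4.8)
The plan is to deduce both statements from the unique decomposition $G=G_1\cup\dots\cup G_r$ of $G$ into indecomposable block graphs together with Proposition \ref{prop:sum of invar}(3). Two preliminary observations make the reduction work. First, each of the $r-1$ vertices along which the $G_j$ are glued is free in the two components it lies in but, having clique degree $2$ in $G$, is an inner vertex of $G$, while every other vertex retains its status; hence $i(G)=\sum_{j=1}^{r}i(G_j)+(r-1)$. Second, for a connected block graph $H$ on at least two vertices one has $\reg S/J_H\ge i(H)+1$, because the Betti number $\beta_{|V(H)|-1,\,|V(H)|+i(H)}(S/J_H)=f(H)-1$ of \cite[Theorem 2.2]{HR} is nonzero: a block graph that is not a single clique has at least two endblocks, each carrying a free vertex, so $f(H)\ge 2$, and $f(H)=|V(H)|\ge 2$ if it is a single clique.

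For (2), I would first show that every induced flower $F_i(v_i)$ of $G$ lies inside a single indecomposable component $G_{j(i)}$. The centre $v_i$ and the centres of the $K_{1,3}$-petals of $F_i(v_i)$ have clique degree at least $3$ in $F_i(v_i)$; since $F_i(v_i)$ is induced and in a block graph two distinct maximal cliques meet in at most one vertex, their clique degree in $G$ is also at least $3$, so none of them is a gluing vertex of $G$. A gluing vertex lying on $F_i(v_i)$ must therefore be one of its petal tips, and deleting a petal tip leaves $F_i(v_i)$ connected, so $F_i(v_i)$ cannot straddle such a vertex; iterating over the recursive decomposition confines $F_i(v_i)$ to one $G_{j(i)}$, as an induced subgraph. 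Now Theorem \ref{The:LowerBound} applied to the indecomposable $G_{j(i)}$ gives $\reg S/J_{G_{j(i)}}\ge i(G_{j(i)})+\cdeg_{F_i}(v_i)-1$, while the preliminary bound gives $\reg S/J_{G_j}\ge i(G_j)+1$ for $j\ne j(i)$; summing with Proposition \ref{prop:sum of invar}(3) and the identity for $i(G)$ yields $\reg S/J_G\ge i(G)+\cdeg_{F_i}(v_i)-1$, and taking the maximum over $i$ proves (2).

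For (1), the inequality $\reg S/J_G\ge i(G)+1$ is the degenerate case of the same summation (there is no term to subtract). For the reverse inequality it suffices, again by Proposition \ref{prop:sum of invar}(3) and the identity for $i(G)$, to prove $\reg S/J_{G_j}\le i(G_j)+1$ for every indecomposable flower-free $G_j$. If $G_j$ is a single clique this is clear ($\reg=1$); otherwise $G_j$ has a vertex $v$ with $\cdeg(v)\ge 3$ by Proposition \ref{prop:indec TFAE}, and I would induct on the number of blocks via the exact sequence
\[
0\To S/J_{G_j}\To S/J_{G'}\oplus S/\bigl((x_v,y_v)+J_{G''}\bigr)\To S/\bigl((x_v,y_v)+J_H\bigr)\To 0
\]
of Remark \ref{decomp cactus}, where now $G'$ is obtained from $G_j$ by connecting the neighbours of $v$, $G''=G_j\setminus v$ and $H=G'\setminus v$, bounding $\reg S/J_{G_j}$ by the maximum of the regularities of the remaining three terms (with the usual shift of $1$ on the last) and checking that flower-freeness passes to $G'$, $G''$ and $H$ with the relevant $i$-invariants dropping by exactly the right amount; alternatively one verifies that an indecomposable flower-free block graph satisfies the hypothesis of \cite[Theorem 2.4 (b)]{HR}, which directly gives $\reg S/J_{G_j}=i(G_j)+1$. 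In either route the main obstacle is the structural analysis of flower-free indecomposable block graphs: the absence of an induced flower at a vertex of clique degree at least $3$ forces a rigid arrangement of the incident blocks (triangles embedded in larger cliques, pendant edges that cannot be completed to a $K_{1,3}$, and so on), and turning this combinatorial rigidity into the bound $\reg S/J_{G_j}\le i(G_j)+1$ — and showing it is inherited by $G'$, $G''$ and $H$, or that it implies the hypothesis of \cite[Theorem 2.4 (b)]{HR} — is where the real work lies.
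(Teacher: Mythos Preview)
Your approach for part (1) is essentially the paper's: decompose $G$ into indecomposable pieces $G_1,\dots,G_r$, use the identity $i(G)=\sum_j i(G_j)+(r-1)$ coming from the tree of pieces, and invoke \cite[Theorem 2.4]{HR} on each indecomposable flower-free piece to get $\reg S/J_{G_j}=i(G_j)+1$, then sum via Proposition \ref{prop:sum of invar}(3). The paper simply cites \cite[Theorem 2.4]{HR} for the indecomposable case without the verification you flag; your caution is justified, but the equivalence between ``indecomposable flower-free'' and the hypothesis of \cite[Theorem 2.4(b)]{HR} is exactly what that theorem provides, so route (b) closes immediately and the exact-sequence induction you sketch as route (a) is unnecessary.

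For part (2) your argument is genuinely more complete than the paper's. The paper's proof is the single line ``immediate consequence of \cite[Corollary 2.2]{MM} and Theorem \ref{The:LowerBound}'', but Matsuda--Murai only gives $\reg S/J_G\ge \reg S/J_{G_{j(i)}}$, which yields $i(G_{j(i)})$ rather than $i(G)$ on the right-hand side. Your route---show that the whole flower sits inside a single indecomposable component (using that its inner vertices have clique degree $\ge 3$ in $G$ and that deleting a free vertex of the flower keeps it connected), apply Theorem \ref{The:LowerBound} to that component, use the $\beta_{n-1,n+i}$ Betti number of \cite[Theorem 2.2]{HR} to get $\reg S/J_{G_j}\ge i(G_j)+1$ on the remaining components, and sum with Proposition \ref{prop:sum of invar}(3) and the inner-vertex identity---is the argument that actually delivers $i(G)$ in the bound. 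This is presumably what the paper's authors had in mind, but your write-up makes the reduction explicit and correct.
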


\begin{proof}
(1) If $G$ is indecomposable, by \cite[Theorem 2.4]{HR}, the result follows. Otherwise, suppose $G$ is decomposable into indecomposable graphs $G_1, \dots, G_r$. Observe that if $v$ is an inner vertex in $G$ then either $\{v\}=G_i \cap G_j$ for some $i \neq j$ and it is a free vertex in $G_i$ and $G_j$, or it belongs to an unique $G_i$ and it is an inner vertex of $G_i$. The former are exactly $r-1$. In fact, if we consider the graph $T$, with vertices $V(T)=\{G_1,\ldots, G_r\}$ and edges $E(T)=\{\{G_i,G_j\}: G_i\cap G_j\neq \emptyset\}$ we observe that $T$ is a tree and $|E(T)|=r-1$. Hence 
\[
i(G) = r -1 + \sum_{i=1}^r i(G_i).
\]
By Proposition \ref{prop:sum of invar} and \cite[Theorem 2.4]{HR}, we get
\[
\reg S/J_G = \sum_{i=1}^r \reg S/J_{G_i} = \sum_{i=1}^r (i(G_i) + 1) = i(G)+1.
\]

\noindent (2) It is an immediate consequence of \cite[Corollary 2.2]{MM} and Theorem \ref{The:LowerBound}.

\end{proof}

\begin{example}\label{ex:2flowers}
Let $G$ be the graph in Figure \ref{Fig:FinalExample}. It contains 2 flower graphs as induced subgraphs: $F_{2,1}(v_1)$ and $F_{3,1}(v_2)$. By Corollary \ref{Cor:LowerBound}, we have $\reg S/J_G \geq 2 + \max \{3,4\} - 1 = 5$, whereas the length of the longest induced path in $G$ is 3 and the number of maximal cliques of $G$ is 6. Also using the upper bound proved in \cite{JNR2}, we get $\reg S/J_G \leq 6$. By means of a computation in CoCoA, $\reg S/J_G  =5$, it means the lower bound given in Corollary \ref{Cor:LowerBound} is sharp. We observe that $G$ is the graph with the minimum number of vertices such that $S/J_G$ has 3 non-zero superextremal Betti numbers. 

\end{example}

\begin{figure}[h]
\begin{center}
\setlength{\unitlength}{0.4cm}
\begin{picture}(12,3)
\newsavebox{\TriCapovolto}
\newsavebox{\TriSdraiato}

\savebox{\Tri}
  (04,03)[bl]{
  \put(00,00){\circle*{.3}}
  \put(04,00){\circle*{.3}}
  \put(02,03){\circle*{.3}}

  \put(00,00){\line(2,3){2}}
  \put(00,00){\line(1,0){4}}
  \put(02,03){\line(2,-3){2}}
}

\savebox{\TriCapovolto}
  (04,03)[bl]{
  \put(02,00){\circle*{.3}}
  \put(00,03){\circle*{.3}}
  \put(04,03){\circle*{.3}}

  \put(02,00){\line(2,3){2}}
  \put(02,00){\line(-2,3){2}}
  \put(00,03){\line(4,0){4}}
}

\savebox{\TriSdraiato}
  (04,03)[bl]{
  \put(00,00){\circle*{.3}}
  \put(03,02){\circle*{.3}}
  \put(03,-02){\circle*{.3}}

  \put(00,00){\line(3,2){3}}
  \put(00,00){\line(3,-2){3}}
  \put(03,02){\line(0,-4){4}}
}

\put(00,00){\usebox{\TriCapovolto}}
\put(00,-03){\usebox{\Tri}}

\put(02,00){\line(8,0){8}}

\put(8,00){\usebox{\TriCapovolto}}
\put(8,-03){\usebox{\Tri}}
\put(10,-02){\usebox{\TriSdraiato}}

\put(02.5,0.2){\boldmath{$v_1$}}
\put(8.5,0.2){\boldmath{$v_2$}}

\end{picture}
\end{center}

\vspace{1.5cm}

\caption{A graph $G$ such that $\displaystyle \reg S/J_G = i(G)+\max_{i=1,2}\{\cdeg_{F_i}(v_i)\}-1$.}\label{Fig:FinalExample}

\end{figure}

Example \ref{ex:2flowers} encourages us to follow up with an algorithm to compute the regularity of binomial edge ideal of block graphs, and it will be the content of the section \ref{sec:algreg}.

The bound exhibited in Corollary \ref{Cor:LowerBound} can be improved for block graphs with several flowers $F_i(v_i)$ with the vertices $v_i$ far enough from each other. In particular, let $H$ be an induced subgraph of $G$ and suppose $H$ is decomposable into $H_1,\ldots, H_r$ such that any $H_i$ contains a flower graph $F_i(v_i)$ as an induced subgraph for $i=1,\ldots,r$. Then
$\reg J_G\geq \reg J_{H}=\sum_{i=1}^r\reg J_{H_i}$, which could be better than the one provided in Corollary \ref{Cor:LowerBound}.

\section{How to compute the Castelnuovo-Mumford regularity of block graphs}\label{sec:algreg}
In this section we provide an efficient method to compute the Castelnuovo-Mumford regularity for $S/J_G$ when $G$ is a block graph.

\begin{definition}\label{Def: end-flowers}
Let $G$ be a block graph and $F(v)$ be a flower graph that is an induced subgraph of $G$. $F(v)$ is called an {\em end-flower} of $G$ if $G=G_1\cup \ldots \cup G_c$, where $c=\cdeg(v)$, and such that $G_i\cap G_j=\{v\}$, for all $1 \leq i  < j \leq c$, and $G_2,\ldots G_c$ are flower-free graphs.
\end{definition}

\begin{theorem}\label{Theo: reg block graph}
Let $G$ be a block graph, $v_1,\ldots, v_r\in V(G)$, $$H_j=G\setminus\{v_1,\ldots,v_{j}\}$$ for $j=1,\ldots,r$, and $H_0=G$. If
\begin{enumerate}
 \item $F(v_j)$ is an end-flower for $H_{j-1}$, for all $j=1,\ldots,r$,
 \item $H_r$ is flower-free,
\end{enumerate}
then
\[
 \reg S/J_G=\reg S/J_{H_r}=c+i(H_r) 
\]
where $c$ is the number of connected components of $H_r$ which are not isolated vertices.
\end{theorem}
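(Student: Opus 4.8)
I would argue by induction on $r$, reducing everything to a single one-step statement about removing the centre of one end-flower.

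\emph{Base case $r=0$.} Here $G=H_0=H_r$ is flower-free, so $\reg S/J_G=c+i(G)$ is precisely Corollary~\ref{Cor:LowerBound}(1) applied to each non-isolated connected component of $G$, combined with the additivity of the regularity from Proposition~\ref{prop:sum of invar}(3); one only needs that an isolated vertex contributes $0$ both to the regularity and to $i(G)$, and that a connected flower-free block graph on at least two vertices $H$ satisfies $\reg S/J_H=i(H)+1$.

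\emph{Inductive step.} By the induction hypothesis applied to $H_1$ with the vertices $v_2,\dots,v_r$ (the hypotheses (1),(2) pass to $H_1$, arguing component-wise so that the flower-free components of $H_1$ are handled by additivity), we get $\reg S/J_{H_1}=c+i(H_r)$. So it suffices to prove the one-step identity: \emph{if $F(v)$ is an end-flower of a block graph $K$, then $\reg S/J_K=\reg S/J_{K\setminus\{v\}}$.} The inequality $\reg S/J_K\ge\reg S/J_{K\setminus\{v\}}$ is immediate from \cite[Corollary 2.2]{MM}, since $K\setminus\{v\}$ is an induced subgraph of $K$. For the reverse inequality I would use the decomposition of $J_K$ at the cutpoint $v$ as in Remark~\ref{decomp cactus}: since $K$ is a block graph the neighbours of $v$ induce a disjoint union of cliques, so by \cite[Corollary 1.5]{R2} one has $J_K=J_{K'}\cap\bigl((x_v,y_v)+J_{K\setminus\{v\}}\bigr)$, where $K'$ is obtained from $K$ by amalgamating the blocks through $v$ into the single clique on $\{v\}\cup N_K(v)$. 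The associated short exact sequence
\[
0\To S/J_K\To S/J_{K'}\oplus S/\bigl((x_v,y_v)+J_{K\setminus\{v\}}\bigr)\To S/\bigl((x_v,y_v)+J_{K'\setminus\{v\}}\bigr)\To 0
\]
gives $\reg S/J_K\le\max\{\reg S/J_{K'},\ \reg S/J_{K\setminus\{v\}},\ \reg S/J_{K'\setminus\{v\}}+1\}$, so the task is reduced to showing $\reg S/J_{K'}\le\reg S/J_{K\setminus\{v\}}$ and $\reg S/J_{K'\setminus\{v\}}+1\le\reg S/J_{K\setminus\{v\}}$.

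\emph{Why these two inequalities hold.} Write $K\setminus\{v\}=G_1'\sqcup\dots\sqcup G_c'$ with $G_i'=G_i\setminus\{v\}$, where $G=G_1\cup\dots\cup G_c$ is the decomposition at $v$ witnessing that $F(v)$ is an end-flower; each $G_i'$ is connected because $v$ lies in a single block of $G_i$, and $\reg S/J_{K\setminus\{v\}}=\sum_{i}\reg S/J_{G_i'}$. Since $F(v)$ is an end-flower, $G_2',\dots,G_c'$ are flower-free, and at least $\cdeg_F(v)\ge 3$ of the $G_i'$ are non-trivial (those coming from the $C_3$- and $K_{1,3}$-blocks of $F(v)$ at $v$), each of which contributes $\reg S/J_{G_i'}\ge i(G_i')+1$. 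On the other hand $K'$ and $K'\setminus\{v\}$ arise from $K\setminus\{v\}$ by identifying the cliques $B_i\setminus\{v\}$ into one clique and re-attaching the tails unchanged (and, for $K'$, also the free vertex $v$); comparing inner vertices gives $i(K')=i(K'\setminus\{v\})=\sum_i i(G_i')$, and decomposing $K'$, $K'\setminus\{v\}$ and the $G_i'$ at the cutpoints lying on these cliques expresses their regularities as $\sum_i i(G_i')$ plus the common contribution of the (untouched) tails, which is already subsumed in each $\reg S/J_{G_i'}$. The surplus of $\reg S/J_{K\setminus\{v\}}$ over this quantity is at least one for every non-trivial $G_i'$, hence at least three, and this slack covers both required inequalities. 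Combined with the trivial lower bound it yields $\reg S/J_K=\reg S/J_{K\setminus\{v\}}$, and chaining over $v_1,\dots,v_r$ finishes the proof.

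\textbf{Main obstacle.} The delicate point is the last paragraph: controlling $\reg S/J_{K'}$ and $\reg S/J_{K'\setminus\{v\}}$ by $\reg S/J_{K\setminus\{v\}}$ when the non-flower pieces $G_2,\dots,G_c$, and especially the piece $G_1$ (which may itself still carry flowers), are large. Amalgamating the blocks through $v$ does not interact cleanly with the decomposition into indecomposable subgraphs when several tails meet at a common vertex of $N_K(v)$, so a careful case analysis of the possible block types at $v$ — or a refinement of the induction that also follows the graphs produced by the amalgamation — is needed. This is exactly the place where the full strength of the end-flower hypothesis, in particular the presence of at least three small ($C_3$ or $K_{1,3}$) blocks at $v$, is used.
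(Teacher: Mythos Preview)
Your overall architecture --- the short exact sequence at the cutpoint $v$, the Matsuda--Murai lower bound, the base case via Corollary~\ref{Cor:LowerBound}(1) --- matches the paper. The gap is exactly where you flag it: the ``one-step identity'' $\reg S/J_K=\reg S/J_{K\setminus\{v\}}$ cannot be established in isolation by your argument. In the paragraph ``Why these two inequalities hold'' you implicitly use a formula of the shape $\reg S/J_{K'}=\sum_i i(G_i')+\text{(tail contribution)}$, but such a formula is only available for flower-free block graphs. When $G_1$ still contains flowers, $K'$ and $K'\setminus\{v\}$ do too, and you have no handle on their regularity; the ``surplus $\ge 3$'' count does not bound $\reg S/J_{K'}$ from above. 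Since you have deliberately stripped away $v_2,\dots,v_r$ before stating the one-step claim, your induction hypothesis on $r$ gives you nothing about $K'$.

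The paper's proof avoids this by \emph{not} isolating a one-step statement. It inducts on the number $f$ of flower-centres in $G$, keeps the whole sequence $v_1,\dots,v_r$ in play, and --- this is the point you are missing --- applies the induction hypothesis to \emph{all three} auxiliary graphs $G'$, $G''=G\setminus\{v_1\}$, and $H=G'\setminus\{v_1\}$, each with the remaining sequence $v_2,\dots,v_r$ (which still satisfies (1) and (2) for those graphs, since amalgamating the blocks through $v_1$ destroys only the flower at $v_1$). This expresses $\reg S/J_{G'}$, $\reg S/J_{G''}$ and $\reg S/J_H$ as sums over the \emph{same} flower-free connected components $G_2,\dots,G_m$ induced by $v_2,\dots,v_r$, plus one differing summand coming from the component $G_1\ni v_1$; the required inequalities then reduce to the flower-free comparison $\reg S/J_{G_1'}=i(G_1)<i(G_1)+\cdeg_F(v_1)-1=\reg S/J_{G_1\setminus\{v_1\}}$, which is Corollary~\ref{Cor:LowerBound}(1) again. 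In short: do not discard $v_2,\dots,v_r$ before invoking the exact sequence; feed them back into the induction on the $G'$ and $H$ sides as well, and the two inequalities you could not close become one-line computations.
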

\begin{proof}
First of all, observe that the equality $\reg S/J_{H_r} = c+i(H_r)$ in the statement is an immediate consequence of Corollary \ref{Cor:LowerBound} (1).

 To prove  $\reg S/J_G=\reg S/J_{H_r}$, we make induction on 
 \[
  f= |\{ v \in V(G) | F(v) \text{ is an induced subgraph of } G\}|.
 \]
If $f=0$, that is $G$ is a flower-free graph, then the assertion follows by Corollary \ref{Cor:LowerBound} (1). Let $f=1$ and $v$ be such that $F(v)$ is an induced subgraph of $G$. Consider the exact sequence
\begin{equation}\label{Exact seq}
 0\To S/J_G \To S/J_{G'}\oplus S/((x_v, y_v)+J_{G''})\To S/((x_v,y_v)+J_{H}) \To 0
\end{equation}
where $G'$, $G''$, and $H$ are as described in Remark \ref{decomp cactus}. We observe that $G'$, $G''$, and $H$ are flower-free. Hence
\[
 \reg S/J_{G'}=\reg S/J_H=i(G')+1=i(G)-1+1=i(G).
\]
Moreover, removing the vertex $v$ from $G$ we obtain $G''$ and $\reg S/J_{G''}$ is
\[
 \sum_{j=1}^c \reg S/J_{G_j}=\sum_{j=1}^c (i(G_j)+1)=\sum_{j=1}^{\cdeg_{F}(v)} (i(G_j)+1)+ \sum_{k=1}^{c'}(i(G_k)+1)
 \]
where $G_1, \dots, G_c$ are the connected components of $G''$, and $\{v,w_k\}$ are maximal cliques in $G$ with $w_k$ a free vertex of $G_k$, and $|V(G_k)| \geq 2$, for $k=1,\ldots,c'$. Observe that, for $j=1,\ldots,\cdeg_{F}(v)$, all the inner vertices of $G$ that belong to $G_j$ are inner vertices also in $G''$. Whereas, for $k=1,\ldots,c'$, the $w_k$ are inner vertices in $G$ but not in $G''$, and all the other inner vertices of $G$ that belong to $G_k$ are inner vertices also in $G''$. Hence, removing $v$ from $G$, we have $c'+1$ less inner vertices in $G''$ with respect to $G$, that are all the $w_k$ and $v$, but this is compensated by the formula $\sum_{k=1}^{c'} (i(G_k) +1) = c' + \sum_{k=1}^{c'} i(G_k) $. Hence
\[
 \reg S/J_{G''}=i(G)+\cdeg_{F}(v)-1.
\]
Since $\cdeg_F(v)\geq 3$, 
\[
 \reg S/J_{G'}, \reg S/((x_v,y_v)+J_{H})  < \reg S/((x_v, y_v)+J_{G''})
\]
and then $\reg S/J_G = \reg S/((x_v, y_v)+J_{G''})$.

Let $f > 1$. Let $v_1, \dots, v_r \in V(G)$ be a sequence that fulfills (1) and (2). Consider the exact sequence (\ref{Exact seq}), with $v=v_1$. Observe that the sequence $v_2, \dots, v_r$ satisfies (1) and (2) for $G'$, $G''$, and $H$ and, since they have less than $f$ flower graphs as induced subgraphs, by induction hypothesis their regularity is given by the sum of the regularity of the connected components induced by $v_2, \dots, v_r$. 

Let $G_1, \dots, G_m$ be the connected components induced by $v_2, \dots, v_r$ in $G$. One of them contains $v_1$, suppose $G_1$, and then it is not flower-free, whereas the others are flower-free. The connected components induced by $v_2, \dots, v_r$ in $G'$ and $H$ are $G_1', G_2, \dots, G_m$ and $G_1'\setminus \{v_1\}, G_2, \dots, G_m$, respectively, where $G_1'$ denotes the graph obtained from $G_1$ by connecting all the vertices adjacent to $v_1$. We get 
\[
\reg S/J_{G'} = \reg S/J_H = \reg S/J_{G_1'} +  \sum_{i=2}^m \reg S/J_{G_i}.
\]
Whereas, the connected components induced by $v_2, \dots, v_r$ in $G''$ are the connected components of $G_1\setminus \{v_1\}$ and $G_2, \dots, G_m$, and then
\[
\reg S/J_{G''} = \reg S/J_{G_1\setminus \{v_1\}} +  \sum_{i=2}^m \reg S/J_{G_i}.
\]
Since
\[
\reg S/J_{G_1'} = i(G_1) < i(G_1) + \cdeg_{F}(v_1) -1 =  \reg S/J_{G_1\setminus \{v_1\}},
\]
where the last equality follows from the same arguments of above and $\cdeg_{F}(v_1)$ denotes the clique degree of $v_1$ in $F(v_1)$, with $F(v_1)$ seen as induced subgraph of $G_1$. Since $F(v_1)$ is an end-flower and $\cdeg_{F}(v_1) \geq 3$ in $G$, it follows $\cdeg_{F}(v_1)\geq 2$ in $G_1$. Observe that, when $\cdeg_{F}(v_1)= 2$ in $G_1$, $G_1$ is flower-free and it is easy to see that the equality $\reg S/J_{G_1\setminus \{v_1\}} = i(G_1) + \cdeg_{F}(v_1) -1$ is still true. Then 
\[
 \reg S/J_{G'}, \reg S/((x_v,y_v)+J_{H})  < \reg S/((x_v, y_v)+J_{G''})
\]
and the assertion is proved. 
\end{proof}

The Theorem \ref{Theo: reg block graph} suggests a recursive way to compute the regularity of $S/J_G$ when $G$ is a block graph. 

\begin{enumerate}[{\footnotesize \hspace{0.08cm} 1.}]
\item ComputeRegularity($G$) 
\item if $G$ is flower-free and is not an isolated vertex 
\item \hspace{0.4cm} return $i(G)+1$ 
\item else 
\item  \hspace{0.4cm} reg := 0; 
\item  \hspace{0.4cm} pinpoint an end-flower $F(v)$ of $G$; 
\item  \hspace{0.4cm} remove $v$ from $G$;  
\item  \hspace{0.4cm} for every connected component $G_i$ induced by $v$ in $G$ 
\item  \hspace{0.8cm} reg := reg + ComputeRegularity($G_i$); 
\item  \hspace{0.4cm} return reg 
\end{enumerate}

\ 

By means of an unique block graph traversal, that is linear with respect to the number of vertices and edges of $G$ (see \cite[Section 22]{CLRS}), one get the regularity of $S/J_G$. This allows to compute the regularity of $S/J_G$ also for those block graphs with a large number of vertices, and then for those binomial edge ideals with a large number of variables, for which the algebraic softwares, as CoCoA \cite{Co} and Macaulay2 \cite{Ma}, fail.

\end{document}